\newcommand{\p}{\partial}
\newcommand \lra {\longrightarrow}
\newcommand \absv [1]{\left \lvert #1 \right \rvert }
\newcommand \lp {\left(}
\newcommand \rp {\right)}
\newcommand \la {\langle}
\newcommand \ra {\rangle}
\newcommand{\set}[1]{\left\{ #1 \right\} }
\newcommand{\wt}[1]{\widetilde{#1}}
\newcommand\RR{\mathbb{R}}
\newcommand\NN{\mathbb{N}}
\newcommand\Id{\operatorname{Id}}
\newcommand\inc{\textrm{in}}
\newcommand\out{\textrm{out}}
\newtheorem{theorem}{Theorem}
\newtheorem{lemma}[theorem]{Lemma}
\newtheorem{proposition}[theorem]{Proposition}
\newtheorem{corollary}[theorem]{Corollary}
\theoremstyle{remark}
\newtheorem{remark}[theorem]{Remark}
\numberwithin{equation}{section}
\numberwithin{theorem}{section}
\DeclareMathOperator \Crit {Crit}
\DeclareMathOperator \sgn {sgn}
\DeclareMathOperator \dist {dist}
\DeclareMathOperator \Ai {Ai}
\DeclareMathOperator \Bi {Bi}
\title{Approximation and equidistribution of phase shifts: spherical symmetry}
\author{Kiril Datchev}
\address{Department of Mathematics, Massachusettes Institute of Technology}
\email{datchev@math.mit.edu}
\author{Jesse Gell-Redman}
\address{Mathematical Sciences Institute, Australian National University and Department of Mathematics, University of Toronto}
\email{jgell@math.toronto.edu}
\author{Andrew Hassell}
\address{Mathematical Sciences Institute, Australian National University}
\email{Andrew.Hassell@anu.edu.au}
\author{Peter Humphries}
\address{Mathematical Sciences Institute, Australian National
  University and Department of Mathematics, Princeton University}
\email{peterch@math.princeton.edu}
\thanks{The first author was partially supported by an NSF postdoctoral fellowship. The authors would like to thank Hamid Hezari, Volker Schlue and Steve Zelditch 
for very helpful conversations. The third author acknowledges the support of the Australian Research Council through a Future Fellowship FT0990895 and Discovery Grant DP1095448.}
\begin{document}

\begin{abstract}

Consider a semiclassical Hamiltonian 
\begin{equation*}
  H_{V, h} :=   h^{2} \Delta + V - E
\end{equation*}
where $h > 0$ is a semiclassical
parameter, $\Delta$ is the positive
Laplacian on $\mathbb{R}^{d}$, $V$ is a smooth, compactly
supported central potential function and $E > 0$ is an energy level. In this setting the
scattering matrix $S_h(E)$ is a unitary operator on
$L^2(\mathbb{S}^{d-1})$, hence with spectrum lying on the unit circle;
moreover, the spectrum is discrete except at $1$.

We show under certain additional assumptions on the potential that the
eigenvalues of $S_h(E)$ can be divided into two classes: a finite
number $\sim c_d (R\sqrt{E}/h)^{d-1} $, as $h \to 0$, where $B(0, R)$
is the convex hull of the support
of the potential, that equidistribute around the unit circle, and the
remainder that are all very close to $1$. Semiclassically, these are
related to the rays that meet the support of, and hence are scattered
by, the potential, and those that do not meet the support of the
potential, respectively.

A similar property is shown for the obstacle problem in the case that the obstacle is the ball of radius $R$. 
\end{abstract}

\maketitle

\section{Introduction}

%Note from Andrew: when I compile the ms at ANU the margin notes don't come out right, but they are fine when I do this at home. If you compile the tex file yourself you should be able to get the margin notes back. 

\

In this paper we consider the scattering matrix for a semiclassical
potential scattering problem with spherical symmetry on $\RR^d$, $d
\geq 2$. Let $V$ be a smooth, compactly supported potential function
which is central, i.e. $V(x)$ depends only on $|x|$. We consider the
Hamiltonian 
\begin{equation} \label{eq:hamiltonian}
  H_{V, h} :=   h^{2} \Delta + V - E
\end{equation}
where $\Delta = - \sum_{i= 1}^{d} \p_{i}^{2}$ is the positive
Laplacian on $\mathbb{R}^{d}$, $E > 0$ is a positive constant (energy) and 
$h > 0$ is a semiclassical
parameter.   At the end of the introduction we will reduce to the case $E = 1$.

The scattering matrix $S_{h}(E)$ for this Hamiltonian can be defined
in terms of the asymptotics of generalized eigenfunctions of $H_{V,h}$
as follows. For each function $q_{in} \in
C^{\infty}(\mathbb{S}^{d - 1})$, there is a unique solution to $H_{V, h}u
= 0$ of the form
\begin{equation}\label{eq:geneigenfunc}
u = r^{-(d-1)/2} \lp e^{-i\sqrt{E}r/h} q_\inc(\omega) + e^{+i\sqrt{E}r/h} q_\out(-\omega) \rp + O(r^{-(d+1)/2}),
\end{equation}
as $r \to \infty$, see e.g.\ \cite{GST}.  Here $q_\out \in
C^{\infty}(\mathbb{S}^{d - 1})$. The map $q_\inc \mapsto e^{i\pi(d-1)/2} q_\out$ is
by definition the scattering matrix $S_{h}(E)$. The factor
$e^{i\pi(d-1)/2}$ is chosen so that this `stationary' definition
agrees with time-dependent definitions (see
e.g.\ \cite{RSIII} or \cite{Yafaev}), and is such that the scattering
matrix for the potential $V \equiv 0$ is the identity map. It is standard
that the scattering matrix
$S_{h}(E)$ is a unitary operator on
$L^2(\mathbb{S}^{d-1})$ for every $h > 0$, and
that, for the potentials under consideration, $S_{h}(E) - \Id$ is
compact. It follows that the spectrum lies on the
unit circle, consists only of eigenvalues, and is discrete except at
$1$. It is therefore possible to count the number of eigenvalues of
$S_{h}(E)$ in any closed interval of the unit circle not containing $1$. In
fact, semiclassically (i.e.\ as $h \to 0$) we are able to separate the spectrum of $S_{h}(E)$
into two parts. One is associated to the rays that meet the support of
the potential; to leading order in $h$ there are $c_d
(R\sqrt{E}/h)^{d-1}$ of these eigenvalues, $c_d = 2 / (d-1)!$, and the other part is associated to the
rays that do not meet the support of the potential. Those eigenvalues
corresponding to  rays that do not meet the support are close to $1$,
as one should
expect, since the eigenvalues of the zero potential are all $1$ ---
see Proposition~\ref{prop:largel} below. The
other eigenvalues are affected by the potential,  and we can ask
whether these `nontrivial' eigenvalues are asymptotically
equidistributed on the unit circle. Indeed Steve Zelditch posed this
question to one of the authors several years ago.

% Our main result is this equidistribution is true for nontrappoing, compactly supported, central potentials which satisfy an additional condition \eqref{scattering-angle-condition} below, which the authors believe is likely to be generic for smooth compactly supported central potentials. We do not attempt to prove this genericity condition in this paper, but we do show that there are plenty of potentials satisfying condition \eqref{scattering-angle-condition}. The authors also believe that the result should hold for generic noncentral compactly supported smooth potentials, and plan to return to this question in a future publication. \ah{Too verbose, cut?}

Before stating the main result, we discuss further the scattering
matrix in the case of central potentials. In this case the
eigenfunctions of the scattering matrix are spherical harmonics and the generalized eigenfunctions then take the form  $u = r^{-(d-2)/2}f(r)Y_{l}^{m}$, where
\begin{equation}\label{eq:1}
  \begin{split}
    \lp -\p_{r}^{2} - \frac{1}{r} \p_{r} + \frac{ \lp l + (d-2)/2
      \rp^{2}}{r^{2}} + \frac{V(r) - E}{h^{2}} \rp f &= 0.
  \end{split}
\end{equation}
Here
\begin{equation}
 f(r)= H^{(1)}_{l + (d - 2)/2}(r\sqrt{E}/h) + c(l)H^{(2)}_{l + (d - 2)/2} (r\sqrt{E}/h) \mbox{ for }
r > R,\label{eq:separated}
\end{equation}
 where the $H^{(i)}_{\nu}$ are the standard Hankel
functions, \cite{AS1964}.  With
our normalization, $S_{h}Y_{l}^{m} = c(l) Y^{m}_{l}$ with $c(l)$ from \eqref{eq:separated}
In particular, the eigenvalue of $Y_{l}^{m}$ is independent of $m$.
%In this article we discuss the
%asymptotic distribution of the eigenvalues of the associated
%scattering matrix $S_{h}$ (defined below), assuming that the potential function
%is `central,' meaning only a function of the radius $r = \absv{x}$.  For
%each fixed $h$, the scattering matrix is a unitary operator on
%$L^{2}(\mathbb{S}^{d - 1})$ with discrete spectrum (necessarily) lying
%on the unit circle $\mathbb{S}^{1} \subset \mathbb{C}$, and for
%central potentials $S_{h}$ is
%diagonalized by the spherical harmonics.  Moreover, if we denote the
%standard spherical harmonics by $Y_{l}^{m}$, so $\Delta_{\mathbb{S}^{d - 1}}
%Y_{l}^{m} = l(l + d - 2) Y_{l}^{m}$, then $S_{h}Y_{l}^{m} =
%e^{i \beta_{l,h}}Y_{l}^{m}$, i.e.\ the eigenvalue depends only on the
%angular momentum, $l$.  
We write the eigenvalue corresponding to $Y_l^m$ in the form $e^{i\beta_{l,h}}$. 
The quantities $\beta_{l, h}/2$ are called `phase shifts.'   See e.g.\ \cite{RSIII} for a review of
these facts.

We now discuss conditions on the potentials in the main theorems. These conditions are dynamical conditions, i.e. conditions on the Hamiltonian dynamical system determined by the symbol of $H_{V,h}$. As usual in microlocal analysis we refer to the classical trajectories of this 
system as bicharacteristics. We first define the interaction region 
\begin{equation}
\mathcal{R} := \set{x : V(\absv{y}) < E \mbox{ for all } \absv{y} >
    \absv{x} }.
\label{intregion}\end{equation}
This is the region of $x$-space accessible by bicharacteristics coming from infinity. 
Notice that for central potentials this region takes the form 
\begin{equation}
\mathcal{R} = \{ |x| \geq r_0 \} \mbox{ where } r_0 = \inf_{r \ge 0} \{s>r \Rightarrow V(s) <E\}.
\label{r0}\end{equation}

The first condition is
\begin{equation}\begin{gathered}
\text{$V$ is nontrapping at energy $E$ in the interaction region.} 
\end{gathered} \label{nontrapping-condition}\end{equation}
That is, $x$ tends to infinity along every bicharacteristic in
$\mathcal{R}$ both forwards and backwards in time.

% Equivalently,
% $V'(r_0) \neq 0$ where $r_0$ is given by \eqref{r0}.  \ah{check!}\jg{I
% think this is not true strictly speaking because if $r_{0} = 0$, e.g. if
% $V \le 0$, then $V'(0) = 0$ by smoothness.}

The second condition concerns the \textit{scattering angle}. Let $R$ be such
that $B(0, R)$ is the smallest ball containing the support of $V$,
i.e.\
\begin{equation}
B(0, R) = \mbox{chsupp} V, \mbox{ the convex hull of the
  support of V.}\label{eq:chsupp}
\end{equation}
 We recall (see
Section~\ref{sec:dynamics} for definitions and details) that for a central
potential, the scattering angle $\Sigma(\alpha)$ is a function only of
the angular momentum $\alpha$ and measures the difference between the
incident and final directions of the trajectory (which are
well-defined, since the motion is free for $|x| > R$ --- see
\cite{RSIII}. The scattering angle is zero for all trajectories with
$\alpha > R$. 
Our second condition is that 
\begin{equation}
\text{the number of zeroes of  $\Sigma'(\alpha) = \frac{d \Sigma}{d\alpha}
  (\alpha)$ in $[0, R)$ is finite.}
\label{scattering-angle-condition}\end{equation}

Then our main results are 

\begin{theorem}\label{thm:eigenvalue}
Let $R$ be as in \eqref{eq:chsupp}, and
  assume that $V \in C^{\infty}_{c}(\mathbb{R}^{d})$ is central and satisfies condition \eqref{nontrapping-condition}. % and \eqref{scattering-angle-condition}.
 % As above, we denote the phase shifts by $\beta_{l,h}/2$, meaning
  %$S_{h} Y_{l}^{m} = e^{i\beta_{l,h}}Y_{l}^{m}$.  
  Define the  real-valued function $G(\alpha)$, $\alpha \in \RR$, by 
\begin{equation}
  \label{eq:galpha}
  \frac{dG}{d\alpha} (\alpha) =  \Sigma(\alpha), \quad G(\alpha) = 0 \text{ for } \alpha \ge R,
\end{equation}
where $\Sigma$ is the scattering angle function in
\eqref{Sigma}. Then  the following approximation on each eigenvalue $e^{i\beta_{l,h}}$ of $S_{h}$ is valid: 
  
  (i) If the dimension $d$ is even, then there
  exists $C = C(d)$ such that, for all $l \in \NN$ satisfying  $lh
  \leq R$, we have an estimate
  \begin{equation}
    \label{eq:eigenvalueh}
   \bigg| e^{i\beta_{l,h}} - \exp\set{ \frac{i}{h} \lp G \big((l + \frac{d-2}{2})h \big)  \rp
     }
     \bigg| \le C h    .
  \end{equation}
(ii) If the dimension $d > 2$ is odd, then for any $\epsilon > 0$ there exists $C = C(\epsilon,d)$ such that \eqref{eq:eigenvalueh} holds 
%  \begin{equation}
%    \label{eq:eigenvaluehd>2}
%   \absv{ e^{i\beta_{l,h}} - \exp\set{ \frac{i}{h} \lp G\lp(l + \frac{d-2}{2})h\rp   \rp
%     }} \le C h    
%  \end{equation}
whenever $\alpha = lh \geq \epsilon$ is distance at least $\epsilon$ from the set 
\begin{equation}
\set{\alpha : \Sigma(\alpha) \in
\set{\pi k}_{k \in \mathbb{Z}}}.
\label{badalphaset}\end{equation}
\end{theorem}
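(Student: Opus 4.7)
The plan is to obtain an explicit semiclassical formula for $c(l) = e^{i\beta_{l,h}}$ by matching a WKB approximation of the regular interior solution of \eqref{eq:1} to the Hankel-function combination \eqref{eq:separated} at $r = R$, and then to identify the resulting phase with $G(\alpha)/h$. Throughout I set $E = 1$ and write $\nu = l + (d-2)/2$, $\alpha = \nu h$.

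First, the substitution $u = r^{(d-1)/2} f$ converts \eqref{eq:1} into a one-dimensional semiclassical Schr\"odinger equation $-h^{2} u'' + (W(r;\alpha) - 1) u = O(h^{2}) u$, with effective potential $W(r;\alpha) = V(r) + \alpha^{2}/r^{2}$. Under \eqref{nontrapping-condition}, for $0 < \alpha < R$ the potential $W(\cdot\,;\alpha)$ has a unique outermost turning point $r_{\alpha} \in (r_{0}, R)$, and standard WKB with a Maslov connection at $r_{\alpha}$ gives the unique (up to scale) interior solution regular at $r = 0$ in oscillatory form on $[r_{\alpha}, R]$, with phase $h^{-1} \Phi_{\text{int}}(r;\alpha) - \pi/4$, where $\Phi_{\text{int}}(r;\alpha) = \int_{r_{\alpha}}^{r} \sqrt{1 - W(s;\alpha)}\, ds$. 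On the exterior side, Debye's large-order Hankel asymptotics give, uniformly so long as $\alpha/R$ is bounded away from $1$,
\begin{equation*}
H^{(1),(2)}_{\nu}(R/h) = \sqrt{\tfrac{2h}{\pi \sqrt{R^{2} - \alpha^{2}}}}\, e^{\pm i \Phi_{\text{ext}}(\alpha)/h \mp i\pi/4}\bigl(1 + O(h)\bigr),
\end{equation*}
with $\Phi_{\text{ext}}(\alpha) = \sqrt{R^{2} - \alpha^{2}} - \alpha \arccos(\alpha/R)$, together with analogous formulas for their $r$-derivatives.

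Continuity of $f$ and $f'$ at $r = R$ then becomes a $2 \times 2$ linear system for the ratio $c(l)$. After tracking the amplitude prefactors and Maslov indices, everything cancels up to $O(h)$, yielding
\begin{equation*}
c(l) = \exp\!\Bigl( \tfrac{2i}{h}\bigl[ \Phi_{\text{int}}(R;\alpha) - \Phi_{\text{ext}}(\alpha) \bigr] + O(h) \Bigr).
\end{equation*}
The bracketed quantity is the difference of the radial actions with and without $V$ at angular momentum $\alpha$; a classical Hamilton--Jacobi computation (Firsov-type inversion) identifies its $\alpha$-derivative with the scattering angle $\Sigma(\alpha)$. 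Since both $2[\Phi_{\text{int}}(R;\cdot) - \Phi_{\text{ext}}]$ and $G$ are primitives of $\Sigma$ that vanish at $\alpha = R$, they agree, and \eqref{eq:eigenvalueh} follows.

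The main obstacle is the transition regime near $\alpha = R$, where the interior turning point collides with the matching radius and Debye's expansion degenerates; one replaces Debye by Olver's uniform Airy-type asymptotics for Hankel functions and handles the interior side by a turning-point-at-the-boundary analysis, still obtaining the same leading phase with $O(h)$ error. The parity-of-dimension distinction and the exceptional set \eqref{badalphaset} enter in the low angular momentum regime, where $\nu$ is no longer large: for even $d$ the integer parity of $\nu$ lets the Debye/Olver expansions extend uniformly down to $\alpha = 0$, giving the unconditional statement (i); for odd $d > 2$ the half-integer parity combines with the centrifugal barrier so that the two stationary-phase branches in the Hankel asymptotics collide precisely at directions where $\Sigma(\alpha) \in \pi\mathbb{Z}$ (forward, backward, and glory scattering), where the $O(h)$ estimate degrades and must be restored by staying at distance $\epsilon$ from the set in \eqref{badalphaset} and from $\alpha = 0$, giving (ii). Keeping uniform control of all Maslov indices and proving the cancellation across both the Debye and Airy regimes is the technically heaviest step.
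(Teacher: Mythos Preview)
Your proposal follows the ODE/WKB route rather than the paper's FIO approach, and the paper explicitly discusses this alternative (see the remark labelled ``FIOvsODE'' at the end of the proof). The heuristic is correct: matching the interior WKB solution to the Debye expansion at $r=R$ does produce $2[\Phi_{\text{int}}(R;\alpha)-\Phi_{\text{ext}}(\alpha)]$, and differentiating in $\alpha$ recovers $\Sigma(\alpha)$ via \eqref{eq:integral}, so this quantity agrees with $G(\alpha)$. That much is textbook physics, as the paper itself notes.

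The genuine gap is in the sentence ``standard WKB with a Maslov connection at $r_\alpha$ gives the unique (up to scale) interior solution regular at $r=0$ in oscillatory form on $[r_\alpha,R]$.'' Nothing in the nontrapping hypothesis prevents the effective potential $V(r)+\alpha^2/r^2$ from having several turning points below $r_\alpha$ (the nontrapping condition is only imposed on the interaction region $\{r\ge r_0\}$, and $r_0$ may be positive). The solution singled out by regularity at the origin must then be tracked through several classically forbidden zones where homogeneous solutions grow like $e^{c/h}$; turning an $O(h^N)$ WKB residual into an $O(h)$ bound on the phase shift requires solving the inhomogeneous equation against such solutions, and this is precisely the step the authors flag as not available in the literature. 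Calling it ``standard'' does not close the gap.

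Your account of the parity distinction is also off. The Hankel function $H^{(1)}_\nu(R/h)$ depends only on $\nu$ and $R/h$; it knows nothing about $V$ or $\Sigma$, so the exceptional set $\{\alpha:\Sigma(\alpha)\in\pi\mathbb{Z}\}$ cannot emerge from ``stationary-phase branches in the Hankel asymptotics colliding.'' In the paper the odd/even split comes from reducing to $d=2$ and $d=3$ via $\beta_{l,h,d+2k}=\beta_{l+k,h,d}$. In $d=2$ the phase $(\varphi-\varphi')v+G(v)$ parametrizes the Legendrian globally; in $d=3$ the parametrization uses the spherical distance $\dist(\omega,\omega')$, which is singular at antipodal and coincident points, forcing the restriction away from $\Sigma(\alpha)\in\pi\mathbb{Z}$ and the use of the Gaussian-beam harmonics $Y_l^l$ (whence also the constraint $\alpha\ge\epsilon$). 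If your ODE argument could be made rigorous it would presumably \emph{not} see this exclusion at all, which would be an improvement over the paper; but as written the mechanism you give for it is not correct.
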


\begin{theorem}\label{thm:main} 
Let $R$ be as in \eqref{eq:chsupp}, and assume that $V  \in C^{\infty}_{c}(\mathbb{R}^{d})$ is central and satisfies conditions \eqref{nontrapping-condition} and \eqref{scattering-angle-condition}.
%be as in \eqref{eq:chsupp}, and
%  assume that $V \in C^{\infty}_{c}(\mathbb{R}^{d})$ be central and satisfies conditions \eqref{nontrapping-condition} and \eqref{scattering-angle-condition}.
  Then as $h \downarrow 0$, we consider the eigenvalues $e^{i
    \beta_{l,h}}$ for which $l \le R\sqrt{E}/h$, counted with multiplicity $p_d(l)  = \dim \ker \lp \Delta_{\mathbb{S}^{d - 1}} -  l(l + d - 2)
  \rp$.  There are $2 (R\sqrt{E}/h)^{d-1} /(d - 1)! +
  O(h^{- (d - 2)})$ of these, and they equidistribute around the unit circle,
meaning that
\begin{equation}
  \label{eq:maintheorem}
  \sup_{0 \le \phi_{0} < \phi_{1} \le 2\pi}\absv{\frac{N(\phi_{0}, \phi_{1})}{
   2 (R\sqrt{E}/h)^{d-1}  / (d - 1)! } - \frac{\phi_{1} -
    \phi_{0}}{2\pi} } \to 0 \mbox{  as  } h \downarrow 0,
\end{equation}
where $N(\phi_{0}, \phi_{1})$ is the number of $\beta_{l, h}$ with
  $l \le R\sqrt{E}/h$ and $\phi_{0} \le \beta_{l,h} \le \phi_{1}$ (mod
  $2\pi$), counted with multiplicity.
  \end{theorem}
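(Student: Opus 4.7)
The plan is to apply Weyl's equidistribution criterion: proving \eqref{eq:maintheorem} is equivalent to showing that for every nonzero $k \in \mathbb{Z}$,
\begin{equation*}
  \frac{1}{N} \sum_{l \le R\sqrt{E}/h} p_d(l)\, e^{ik\beta_{l,h}} \longrightarrow 0 \quad \text{as } h \downarrow 0,
\end{equation*}
where $N = 2(R\sqrt{E}/h)^{d-1}/(d-1)!$. The first step is to invoke Theorem \ref{thm:eigenvalue} to replace $e^{ik\beta_{l,h}}$ by $e^{ikG(\alpha_l)/h}$, where $\alpha_l = (l+(d-2)/2)h$. Since $|e^{ik\beta_{l,h}} - e^{ikG(\alpha_l)/h}| = O(kh)$ per term and $\sum p_d(l) = O(N)$, the substitution introduces an error of order $O(Nh) = o(N)$. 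In odd dimension $d > 2$ the estimate fails on the set \eqref{badalphaset}; however this set is countable and piecewise nondegenerate, so an $\epsilon$-neighborhood of it contains only $O(\epsilon/h)$ values of $l$, contributing weighted mass $O(\epsilon N)$ which is harmless after letting $h \to 0$ and then $\epsilon \to 0$.

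The bulk of the work is then to bound the Weyl sum
\begin{equation*}
  S_k(h) := \sum_{l} p_d(l)\, e^{ikG(\alpha_l)/h} = o(N).
\end{equation*}
I would next pass from this discrete sum to an oscillatory integral via Poisson summation, extending $p_d$ to a polynomial $\tilde{p}_d(x)$ of degree $d-2$ and cutting off smoothly outside $[0,R+\epsilon]$. Writing the Poisson transform $\hat{F}(m)$ via the change of variables $\alpha = (x+(d-2)/2)h$ produces oscillatory integrals with phases $\phi_m(\alpha) = kG(\alpha) - 2\pi m\alpha$ scaled by $1/h$. Repeated integration by parts away from stationary points shows that, for $|m|$ exceeding $k\|\Sigma\|_\infty/(2\pi)$, the phase is nonstationary uniformly and $\hat{F}(m)$ decays like $(h/|m|)^N$, making the large-$|m|$ tail summable and $o(N)$. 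Only finitely many $m$ therefore require genuine oscillatory analysis, and the leading ($m=0$) term reduces the problem to controlling
\begin{equation*}
  J_k(h) := \int_0^R \alpha^{d-2}\, e^{ikG(\alpha)/h}\, d\alpha.
\end{equation*}

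The final step uses condition \eqref{scattering-angle-condition} to handle $J_k(h)$ (and similarly $\hat{F}(m)$ for the finitely many critical $m$). The stationary points of the phase $\phi_m$ are the solutions of $\Sigma(\alpha) = 2\pi m/k$. Since $\Sigma'$ has only finitely many zeros in $[0,R)$, the function $\Sigma$ is piecewise monotonic, and in particular only finitely many of these stationary points can be degenerate. I would isolate small $\epsilon$-neighborhoods of the zeros of $\Sigma'$ (contributing at most $O(\epsilon)$ to the integral by crude estimation) and apply van der Corput's lemma with the second-derivative bound $|\phi_m''| = k|\Sigma'| \ge k\delta_\epsilon/h$ on the complement, yielding an $O((h/\delta_\epsilon)^{1/2})$ bound there for the $BV$ weight $\alpha^{d-2}$. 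Letting $h \to 0$ and then $\epsilon \to 0$ gives $J_k(h) \to 0$, hence $S_k(h) = o(N)$, completing the proof. The main technical obstacle is the Poisson-summation reduction from sum to integral: a naive Riemann-sum comparison loses a factor of $h^{-1}$ from differentiating the oscillatory exponential and so matches the main term rather than beating it, so the estimate must use the cancellation of the exponential at each step — exactly what Poisson summation plus van der Corput supplies.
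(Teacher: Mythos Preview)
Your approach is correct in outline but takes a genuinely different route from the paper. The paper never passes to oscillatory integrals via Poisson summation; instead it stays entirely in the discrete world. Concretely, the paper first proves (Lemma~\ref{thm:exparg}) that the \emph{approximate} points $e^{iG(lh)/h}$ equidistribute by combining the Erd\H{o}s--Tur\'an inequality (a quantitative Weyl criterion bounding discrepancy by Weyl sums) with a discrete second-derivative exponential-sum estimate (Theorem~\ref{thm:expsum}, essentially discrete van~der~Corput). The multiplicities $p_d(l)$ are handled not by treating them as a polynomial weight in a single sum, but by a superposition trick (Theorem~\ref{thm:superposition}): one writes the multiset $\mathcal{E}_h$ as a union of $p_d(N)$ \emph{unweighted} tails $\omega(n) = \{e^{iG(lh)/h} : n \le l \le N\}$, bounds each $D(\omega(n))$ by the discrete van~der~Corput estimate, and averages. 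Proposition~\ref{thm:realrealdiscrep} then transfers equidistribution from the approximations to the true $e^{i\beta_{l,h}}$ using Theorem~\ref{thm:eigenvalue}, exactly as in your first step. What your Poisson-summation route buys is a more analytical reduction to a single oscillatory integral; what the paper's route buys is a directly quantitative discrepancy bound (the estimate \eqref{eq:epsilondiscrep}), which it later reuses to get the explicit $O(k^{-1/3})$ rate in the obstacle case (Theorem~\ref{thm:disk}). Two small points: under \eqref{scattering-angle-condition} the bad set \eqref{badalphaset} is actually finite (not merely countable), since $\Sigma$ is bounded and piecewise monotone; and your smoothing of the sharp cutoffs at $l=0$ and $l=R/h$ needs the observation that an $\epsilon$-band in $\alpha$ carries mass $O(\epsilon N)$, which you use implicitly but should state.
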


\

\begin{remark} The approximation \eqref{eq:eigenvalueh} for the phase
  shifts can be found in physics textbooks; see for example
  \cite[Sect. 126]{LL1965} or \cite[Equation (18.11), Section
  18.2]{Newton1966}; it can be derived readily from the WKB
  approximation applied to \eqref{eq:1}. However, no error estimate is
  claimed in either of these sources. We have not
  been able to find any rigorous bounds on the WKB approximation of
  the phase shifts in any previous literature, so we believe the bound
  \eqref{eq:eigenvalueh} to be new. 
\end{remark}

\begin{remark}\label{rem:examples} Many potentials satisfy conditions \eqref{nontrapping-condition} and \eqref{scattering-angle-condition} --- see Section~\ref{sec:example}. 
%For example, take any smooth compactly supported function $W(r)$ such that $W(r) = 0$ for $r \geq R$, $W(r) > 0$ for $0 \leq r < R$ and $W''(r)$ is positive and monotone decreasing in some nonempty interval $[R- \epsilon, R)$. Then $c W(r)$ satisfies the conditions for sufficiently large $c$. An explicit example is 
%$$
%W(r) = \begin{cases} e^{1/(r^2 - R^2)}, \quad \, r < R \\
%0, \qquad \qquad \quad r \geq R. \end{cases}
%$$
%More generally, in Section~\ref{sec:example} we show that a potential
%satisfying $V' \leq 0$ and $V' + r V'' > 0$ on the interaction region
%$\mathcal{R}$ also satisfies conditions \eqref{nontrapping-condition}
%and \eqref{scattering-angle-condition}. This condition is actually
%much stronger than \eqref{scattering-angle-condition}: it implies that
%$\Sigma'(\alpha)$ has no zeroes at all on $[0, R)$.  
 \end{remark}

We also show 
\begin{proposition}\label{prop:largel}
Let $V$ and $R$ be as in Theorem~\ref{thm:eigenvalue}, and let $\kappa \in (0, 1)$.
The eigenvalues $e^{i \beta_{l,h}}$ for $l \geq (R \sqrt{E} + h^\kappa)/h$ satisfy 
$$
\Big| e^{i\beta_{l,h}} - 1 \Big| = O( h^{\infty}), \quad h \to 0.
$$
Here and below, $O(h^{\infty})$ denotes a quantity that is bounded by
$C_{N}h^{N}$ for all $N$ and some $C_{N} > 0$.
\end{proposition}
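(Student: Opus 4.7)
The plan is to reduce the claim to showing that the regular-at-origin solution of \eqref{eq:1} on $[0, R]$ differs from the free Bessel solution $J_\nu(r\sqrt{E}/h)$ (where $\nu = l + (d-2)/2$) by $O(h^\infty)$ at the matching point $r = R$, measured in terms of logarithmic derivatives. Two features make this possible in the stated regime. First, for $l \geq (R\sqrt{E} + h^\kappa)/h$ the nontrapping hypothesis forces the effective Schr\"odinger potential $V_{\mathrm{eff}}(r) = V(r) + h^2(\nu^2 - 1/4)/r^2$ to satisfy $V_{\mathrm{eff}}(r) - E \geq ch^\kappa$ uniformly for $r \in [0, R]$ (the nontrapping condition rules out isolated interior pockets at angular momentum $\alpha \geq R\sqrt{E}$, forcing $V(r) \geq E(1 - R^2/r^2)$ on $[0,R]$, and the extra $h^\kappa$ in $\alpha$ translates into the stated bound). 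Second, since $\mathrm{chsupp}\, V = B(0, R)$ and $V \in C^\infty_c$, the potential vanishes to infinite order at $r = R$: $|V(s)| \leq C_N (R - s)^N$ for every $N$ near $s = R$.

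Matching $f = H^{(1)}_\nu(r\sqrt{E}/h) + c(l)\,H^{(2)}_\nu(r\sqrt{E}/h)$ on $r > R$ with a multiple of the interior solution $u_0$ at $r = R$ expresses $c(l)$ as a M\"obius transformation of $\tilde M := (h/\sqrt{E})\,u_0'(R)/u_0(R)$. Writing $\tilde M_J = J_\nu'(z)/J_\nu(z)$ for the free analogue at $z = R\sqrt{E}/h$ and setting $\delta\tilde M = \tilde M - \tilde M_J$, the Bessel Wronskian $J_\nu H^{(2)\prime}_\nu - J_\nu' H^{(2)}_\nu = -2i/(\pi z)$ gives, after simplification,
\[
c(l) - 1 \;=\; \frac{i \pi z\,\delta\tilde M\,J_\nu(z)^2}{1 - (i\pi z/2)\,\delta\tilde M\,J_\nu(z)\,H^{(2)}_\nu(z)}.
\]
Olver's uniform Airy-type asymptotics give $|J_\nu(z)| \leq 1$ and bound $|z\,J_\nu(z)\,H^{(2)}_\nu(z)|$ by a negative power of $h$ uniformly in $l$ in the stated range, so the proposition reduces to showing $\delta\tilde M = O(h^\infty)$.

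Passing to Schr\"odinger form via $g = r^{1/2} u_0$ and letting $N = g'/g$, $N_0 = g_0'/g_0$ denote the logarithmic derivatives of the perturbed and free solutions, subtraction of their Riccati equations $N' = (V_{\mathrm{eff}} - E)/h^2 - N^2$ and its $V \equiv 0$ analogue yields the linear equation $\delta N' + (N + N_0)\,\delta N = V/h^2$ with $\delta N \to 0$ as $r \to 0$. Integration gives
\[
\delta N(R) \;=\; \frac{1}{h^2}\int_0^R V(s)\,\exp\!\Big({-\!\int_s^R (N + N_0)(\tau)\, d\tau}\Big)\, ds,
\]
and $\delta\tilde M = (h/\sqrt{E})\,\delta N(R)$. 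Split this integral at $s = R - h^{1/2}$. On $[R - h^{1/2}, R]$, the flatness of $V$ at $R$ gives $|V(s)| \leq C_N h^{N/2}$ for every $N$ while the integrating factor is $\leq 1$, contributing $\leq C_N h^{N/2 - 3/2}$, hence $O(h^\infty)$. On $[0, R - h^{1/2}]$, the distance from the interval to the turning point $r_* = \nu h/\sqrt{E}$ is at least $h^{1/2}$, which dominates the Airy scale $h^{2/3}$, so the WKB approximation $N, N_0 = h^{-1}\sqrt{V_{\mathrm{eff}} - E} + O(1)$ is valid; combined with the lower bound $V_{\mathrm{eff}} - E \geq ch^\kappa$, the integrating factor is bounded by $\exp(-c\,h^{-(1-\kappa)/2})$, which is $O(h^\infty)$ for any $\kappa < 1$. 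Combining the two contributions, $\delta\tilde M = O(h^\infty)$, and hence $|c(l) - 1| = O(h^\infty)$ uniformly for $l$ in the stated range. The principal obstacle is ensuring the WKB control of $N$ and $N_0$ is uniform in $l$ across the whole range $l \geq (R\sqrt{E} + h^\kappa)/h$; Olver's uniform Airy-type asymptotics for Bessel functions of large order provide precisely the estimates needed.
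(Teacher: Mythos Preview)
Your approach is an ODE/Riccati argument, fundamentally different from the paper's, which pairs the FIO decomposition $S_h = K_1 + K_2 + K_3$ from \cite{HW2008} against the spherical harmonics $Y_l^l$ and uses nonstationary-phase/integration-by-parts in the oscillatory integrals. The paper's route never touches the radial ODE at all; in fact Remark~\ref{FIOvsODE} explicitly explains why the authors avoided the ODE method.

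There is a genuine gap in your Step~1. You assert that nontrapping forces $V_{\mathrm{eff}}(r)-E \geq c h^\kappa$ on all of $[0,R]$, equivalently $V(r)\geq E(1-R^2/r^2)$ there. But the nontrapping hypothesis~\eqref{nontrapping-condition} is only imposed on the interaction region $\mathcal{R}=\{r\geq r_0\}$ of \eqref{r0}. When $r_0>0$ (i.e.\ $V(r)\geq E$ somewhere), nothing prevents an ``interior pocket'' at angular momentum $\alpha\geq R$ lying entirely in $\{r<r_0\}$: one can have $V(r)+\alpha^2/r^2<E$ on some interval $(a,b)\subset(0,r_0)$, and the corresponding trapped orbit is outside $\mathcal{R}$, so \eqref{nontrapping-condition} does not exclude it. In such a pocket the regular-at-origin solution $g$ oscillates, $N=g'/g$ may blow up at zeros of $g$, and the WKB estimate $N\approx h^{-1}\sqrt{V_{\mathrm{eff}}-E}$ on $[0,R-h^{1/2}]$ fails outright. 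Your integrating-factor bound $\exp(-\int_s^R(N+N_0))\leq \exp(-c h^{-(1-\kappa)/2})$ then has no justification for $s$ in the pocket. This is precisely the ``several turning points'' and ``exponential growth in the non-interaction region'' obstacle flagged in Remark~\ref{FIOvsODE}.

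Your argument \emph{does} go through under the extra assumption $r_0=0$ (equivalently $V<E$ everywhere), since then nontrapping is global and your pocket-exclusion claim is valid. It may also be salvageable in general by exploiting that the free factor $g_0(s)/g_0(R)$ already carries full exponential decay across $[0,R]$ (the free effective potential has no pocket), and arguing that this dominates whatever growth $g(s)/g(R)$ can exhibit through a barrier of width $\geq R-r_0$; but that requires genuinely new estimates (uniform control of the connection problem across the interior turning points, ruling out or quantifying shape-resonance effects), which your sketch does not supply.
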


\begin{remark} The methods of \cite[Section 4]{PZ2001} show that for a `black box perturbation' of the Laplacian on $\RR^d$, at most $O(h^{-(d-1)})$ eigenvalues of the scattering matrix are essentially different from $1$.
\end{remark}  

Note that the number of eigenvalues not covered by
Theorem~\ref{thm:eigenvalue} and Proposition~\ref{prop:largel} is
$o(h^{1-d})$, and hence cannot affect the equidistribution
properties. Hence we get the following equidistribution result for the
full sequence of eigenvalues of $S_{h}$.

\begin{corollary}
Suppose that $V$ satisfies  conditions \eqref{nontrapping-condition} and \eqref{scattering-angle-condition}. Then for each $\epsilon > 0$, we have 
\begin{equation}
  \label{eq:maintheoremepsilon}
  \sup_{\epsilon \le \phi_{0} < \phi_{1} \le 2\pi - \epsilon}\absv{\frac{\tilde N(\phi_{0}, \phi_{1})}{
   2 (R\sqrt{E}/h)^{d - 1} / (d - 1)! } - \frac{\phi_{1} -
    \phi_{0}}{2\pi} } \to 0 \mbox{  as  } h \downarrow 0,
\end{equation}
\textit{where $\tilde N(\phi_{0}, \phi_{1})$ is the number of $\beta_{l, h}$ (with no condition on $l$), counted with multiplicity,  satisfying $\phi_{0} < \beta_{l,h} < \phi_{1}$ (mod $2\pi$)}.
\end{corollary}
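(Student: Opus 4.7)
The plan is to split the index set $\{l \in \NN\}$ into three ranges and compare the count $\tilde N(\phi_0,\phi_1)$ to $N(\phi_0,\phi_1)$ from Theorem~\ref{thm:main}. Fix any $\kappa \in (0,1)$ and write
\[
\mathcal{L}_{\mathrm{low}} = \{l : l \le R\sqrt{E}/h\}, \quad
\mathcal{L}_{\mathrm{mid}} = \{l : R\sqrt{E}/h < l < (R\sqrt{E}+h^\kappa)/h\}, \quad
\mathcal{L}_{\mathrm{high}} = \{l : l \ge (R\sqrt{E}+h^\kappa)/h\}.
\]
Let $N_{\mathrm{mid}}$ and $N_{\mathrm{high}}$ denote the contributions of $\mathcal{L}_{\mathrm{mid}}$ and $\mathcal{L}_{\mathrm{high}}$ (with multiplicity $p_d(l)$) to $\tilde N(\phi_0,\phi_1)$, so that $\tilde N(\phi_0,\phi_1) = N(\phi_0,\phi_1) + N_{\mathrm{mid}} + N_{\mathrm{high}}$.

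The high range is handled immediately by Proposition~\ref{prop:largel}: every eigenvalue $e^{i\beta_{l,h}}$ with $l \in \mathcal{L}_{\mathrm{high}}$ satisfies $|e^{i\beta_{l,h}} - 1| = O(h^\infty)$, so for $h$ sufficiently small (depending on $\epsilon$) none of these $\beta_{l,h}$ lies in $[\epsilon, 2\pi-\epsilon]$ modulo $2\pi$, and hence $N_{\mathrm{high}} = 0$ for small $h$.

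For the middle range, I would use the crude multiplicity bound $p_d(l) = O(l^{d-2}) = O(h^{-(d-2)})$, valid uniformly for $l = O(h^{-1})$, together with the fact that $\#\mathcal{L}_{\mathrm{mid}} = O(h^{\kappa-1})$. This yields
\[
N_{\mathrm{mid}} \;\le\; \#\mathcal{L}_{\mathrm{mid}} \cdot \max_{l \in \mathcal{L}_{\mathrm{mid}}} p_d(l) \;=\; O(h^{\kappa - (d-1)}),
\]
which is $o(h^{-(d-1)})$ since $\kappa > 0$. Therefore $N_{\mathrm{mid}}$, once divided by $2(R\sqrt{E}/h)^{d-1}/(d-1)!$, tends to zero uniformly in $(\phi_0,\phi_1)$.

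Combining the three pieces, for each fixed $\epsilon > 0$ and uniformly in $\epsilon \le \phi_0 < \phi_1 \le 2\pi - \epsilon$,
\[
\frac{\tilde N(\phi_0,\phi_1)}{2(R\sqrt{E}/h)^{d-1}/(d-1)!} = \frac{N(\phi_0,\phi_1)}{2(R\sqrt{E}/h)^{d-1}/(d-1)!} + o(1),
\]
and Theorem~\ref{thm:main} identifies the right-hand side as $(\phi_1-\phi_0)/(2\pi) + o(1)$. This gives \eqref{eq:maintheoremepsilon}. The only even mildly delicate point is the bookkeeping in the middle range; everything else is a direct application of the previously established results, so there is no serious obstacle beyond verifying that the exponent $\kappa - (d-1)$ is strictly greater than $-(d-1)$, which holds by the choice $\kappa > 0$.
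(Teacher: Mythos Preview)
Your proof is correct and follows exactly the approach outlined in the paper: split into the range $l \le R\sqrt{E}/h$ handled by Theorem~\ref{thm:main}, the range $l \ge (R\sqrt{E}+h^\kappa)/h$ handled by Proposition~\ref{prop:largel} (where the restriction to $[\epsilon, 2\pi-\epsilon]$ is used), and the intermediate range whose total multiplicity is $o(h^{1-d})$. The paper merely asserts this last count without writing out the arithmetic, so your explicit bound $N_{\mathrm{mid}} = O(h^{\kappa-(d-1)})$ is a welcome clarification rather than a departure.
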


Results directly analogous to those for semiclassical potentials are also true in the case of
scattering by a disk of radius $R$ centered at the origin.  The
scattering matrix in this case can be defined similarly; given any
function $q_{\inc} \in C^{\infty}(\mathbb{S}^{d - 1})$, there is a
unique solution $u$ to the equation $\lp  \Delta - k^2 \rp u = 0$ such that\footnote{Here we prefer to use non-semiclassical notation where the energy level is $k^2$, as is traditional in obstacle scattering literature. The variable $k$ here corresponds to $1/h$ above, when the energy level $E = 1$.}
\begin{equation}\label{eq:scatsoln}
  \begin{split}
    &u = r^{- (d - 1)/2} \lp e^{-i k r} q_{\inc}(\omega) + e^{+ ik r}
    q_{\out}(-\omega) \rp + O(r^{-(d + 1)/2}), \quad r \to \infty \\
    &\qquad u \rvert_{\absv{x} = R} \equiv 0.
  \end{split}
\end{equation}
The scattering matrix $S_{k}$ is again defined $q_{\inc} \mapsto
e^{i\pi(d-1)/2} q_\out$, and the standard facts about the operator
$S_{h}$ also hold for $S_{k}$.  As above, the spherical harmonics
diagonalize the scattering matrix.  We write $S_{k} Y_{l}^{m} = e^{i
  x_{l, k}}Y_{l}^{m}$.  
  %(Please note: in the semiclassical case, $h\to 0$, while in the case of scattering by a disk, $k \to \infty$!)  
  We will prove
\begin{theorem}\label{thm:disk}
  As $k \to \infty$, the eigenvalues $e^{i
    x_{l, k}}$ of $S_k$ satisfy 
\begin{equation}
\bigg| e^{ix_{l,k}} - e^{i \big(kG_b((l + (d-2)/2)/k)   +
  \pi/2\big)} \bigg| \leq Ck^{-1/2}, \quad \frac{l}{k} \leq R
 - k^{-1/3}
\label{ball-eval-approx}\end{equation}
for some uniform $C = C(d)$, where $\Sigma_b(\alpha) := -2 \cos^{-1} (\alpha/R)$  is the scattering angle for the ball and 
$G_b$ is defined by 
\begin{equation}
G_b(\alpha) = - \int_\alpha^R \Sigma_b(\alpha') \, d\alpha' = 2 \sqrt{R^2 - \alpha^2} - 2 \alpha \cos^{-1} (\alpha/R). 
\label{Gbdef}\end{equation}
The points $e^{ix_{l,k}}$ for which $l \le Rk$, counted with multiplicity $$p_d(l)
  = \dim \ker \lp \Delta_{\mathbb{S}^{d - 1}} -  l(l + d - 2)
  \rp,$$ equidistribute around $\mathbb{S}^{1}$ in the sense of 
 Theorem \ref{thm:main}.  In fact, we have the stronger statement
  \begin{equation}
    \label{eq:fastdecrease}
      \sup_{0 \le \phi_{0} < \phi_{1} \le 2\pi}\absv{\frac{N(\phi_{0}, \phi_{1})}{
   2 (Rk)^{d - 1} / (d - 1)! } - \frac{\phi_{1} -
    \phi_{0}}{2\pi} } = O(k^{-1/3})  \mbox{ as } k \to \infty.
  \end{equation}
\end{theorem}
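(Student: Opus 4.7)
The plan rests on the fact that for a ball obstacle the scattering matrix diagonalizes exactly in Hankel functions. First I would write the unique solution to \eqref{eq:scatsoln} in separated form $u = r^{-(d-2)/2}f(r)Y_l^m$; the radial ODE forces $f(r) = a H^{(1)}_\nu(kr) + b H^{(2)}_\nu(kr)$ with $\nu = l + (d-2)/2$, and matching the asymptotics in \eqref{eq:scatsoln} together with the Dirichlet condition $f(R) = 0$ yields the exact identity
\begin{equation*}
  e^{ix_{l,k}} = -\frac{H^{(1)}_\nu(kR)}{H^{(2)}_\nu(kR)}.
\end{equation*}
The theorem thus reduces to asymptotic analysis of this one ratio.

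To obtain \eqref{ball-eval-approx} I would apply the Debye asymptotic expansion for $H^{(1,2)}_\nu(kR)$ in the regime $\nu \le kR - k^{2/3}$; writing $\cos\beta = \nu/(kR)$ this keeps $\sin\beta \gtrsim k^{-1/6}$. The amplitudes in the Debye formulas cancel in the ratio, leaving
\begin{equation*}
  \frac{H^{(1)}_\nu(kR)}{H^{(2)}_\nu(kR)} = \exp\bigl(i(2kR\sin\beta - 2kR\beta\cos\beta - \tfrac{\pi}{2})\bigr)(1 + \mathcal{E}),
\end{equation*}
and the algebraic identity $2kR\sin\beta - 2kR\beta\cos\beta = 2k\sqrt{R^2 - (\nu/k)^2} - 2\nu\cos^{-1}(\nu/(kR)) = kG_b(\nu/k)$ reproduces the target phase, with the factor $-1$ in front of the ratio converting $-\pi/2$ into $+\pi/2$. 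The first Debye correction contributes $\mathcal{E} = O((\nu \tan^3\beta)^{-1})$, which at the edge $\beta \sim k^{-1/6}$ is precisely $O(k^{-1/2})$, matching \eqref{ball-eval-approx}.

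For \eqref{eq:fastdecrease}, note first that the $l$ with $Rk - k^{2/3} < l \le Rk$ excluded from \eqref{ball-eval-approx} number $O(k^{2/3})$ and carry weights $p_d(l) = O(k^{d-2})$, contributing at most $O(k^{d-4/3})$ to the count; divided by the total $\sim 2(Rk)^{d-1}/(d-1)!$ this is exactly the claimed error $O(k^{-1/3})$. For the remaining $l$, \eqref{ball-eval-approx} lets me replace $x_{l,k}$ by $kG_b((l+(d-2)/2)/k) + \pi/2$ modulo an $O(k^{-1/2})$ perturbation per eigenvalue, and the strict convexity $G_b''(\alpha) = 2/\sqrt{R^2 - \alpha^2} \ge 2/R$ implies that at most $O(k^{1/2})$ perturbed phases fall within $O(k^{-1/2})$ of $\phi_0$ or $\phi_1$, for combined weight $O(k^{d-3/2})$, which is smaller than $k^{d-4/3}$ and so absorbed in the error.

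It then remains to show that $\sum_l p_d(l)\,\mathbf{1}\{kG_b((l+(d-2)/2)/k) + \pi/2 \in (\phi_0,\phi_1) \bmod 2\pi\}$ equals $(\phi_1 - \phi_0)/(2\pi)$ times the total, up to $O(k^{d-4/3})$. This follows from a standard quantitative Weyl-equidistribution argument (for instance Erd\H{o}s--Turán, or a direct Riemann-sum comparison) applied to the smooth monotone phase $\alpha \mapsto kG_b(\alpha)/(2\pi)$, whose derivative $kG_b'(\alpha)/(2\pi) = -k\cos^{-1}(\alpha/R)/\pi$ is bounded away from zero on the allowed range, weighted against the smooth density reproducing $p_d(l) \sim 2l^{d-2}/(d-2)!$. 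The principal technical obstacle is the first step: obtaining the Debye remainder $\mathcal{E} = O(k^{-1/2})$ uniformly down to $\beta \sim k^{-1/6}$ is delicate because the Debye coefficients involve unbounded powers of $\cot\beta$, so one must either truncate the series at the right order after explicit estimation of those coefficients or, more cleanly, invoke Olver's uniform Airy-type expansion across the turning region. The downstream counting is then essentially combinatorial, powered entirely by the explicit form and convexity of $G_b$.
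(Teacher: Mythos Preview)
Your derivation of $e^{ix_{l,k}} = -H^{(1)}_\nu(kR)/H^{(2)}_\nu(kR)$ and the extraction of the phase $kG_b(\nu/k) + \pi/2$ from Hankel asymptotics is essentially the paper's approach; the paper splits into $\alpha \le 3/4$ (integral representation plus stationary phase) and $\alpha \in [1/2, 1-k^{-1/3}]$ (the uniform Airy expansion \cite[9.3.35--36]{AS1964}), which is exactly the ``Olver'' alternative you already flag for the turning region, and your $O(k^{-1/2})$ error at $\beta \sim k^{-1/6}$ is correct.

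There are two soft spots in the equidistribution half. First, the claim that convexity of $G_b$ alone forces at most $O(k^{1/2})$ approximate phases to lie within $O(k^{-1/2})$ of a given point is not justified: the map $l \mapsto kG_b(l/k)$ wraps $\sim k$ times around the circle and each lap can contribute an integer, so the naive bound is only $O(k)$; beating this already requires the quantitative equidistribution you are trying to establish, so the argument is circular as ordered. The paper avoids this entirely by proving equidistribution of the approximate phases \emph{first} and then observing (as in Proposition~\ref{thm:realrealdiscrep}) that an $O(k^{-1/2})$ perturbation of every phase shifts the discrepancy by $O(k^{-1/2})$, simply via $N(\phi_0,\phi_1;\text{exact}) \le N(\phi_0 - Ck^{-1/2}, \phi_1 + Ck^{-1/2};\text{approx})$. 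Second, your stated reason for equidistribution of the approximates --- that the first derivative $kG_b'$ is bounded away from zero --- is the wrong hypothesis (a linear phase with rational slope has nonzero derivative and no equidistribution whatsoever). The operative input for Erd\H{o}s--Tur\'an plus van der Corput is the uniform lower bound on the \emph{second} derivative, $G_b''(\alpha) = 2/\sqrt{R^2-\alpha^2} \ge 2/R$, which you computed but did not invoke at the right moment; the paper feeds exactly this (as $\tilde\rho \ge c > 0$, $\tilde\kappa \le \pi$ in \eqref{eq:rhotilde}) into the explicit bound \eqref{eq:epsilondiscrep} and takes $\gamma = 1/3$ to produce the $O(k^{-1/3})$ rate. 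With these two corrections your outline goes through and coincides with the paper's proof.
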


As far as we are aware, the present paper is the first in the mathematical literature to deal with the question of the equidistribution of phase shifts over the unit circle. 
However, there are a number of previous studies of high-energy or semiclassical asymptotics of eigenvalues of the scattering matrix. The relation between the sojourn time and high-frequency asymptotics of the scattering matrix was observed in classical papers by Guillemin \cite{G1976}, Majda \cite{Maj1976} and Robert-Tamura \cite{RT1989}.  Melrose and Zworski \cite{MZ1996} showed that for fixed $h >  0$ the absolute scattering matrix for a Schr\"odinger operator on a scattering, or asymptotically conic, manifold is an FIO associated to the geodesic flow on the manifold at infinity for time $\pi$.  Alexandrova \cite{A2005} studied the scattering matrix for a nontrapping semiclassical Schr\"odinger operator, and showed that localized to finite frequency,  it is a semiclassical FIO associated to the limiting Hamilton flow relation at infinity, which includes the behavior of the Hamilton flow in compact sets. A more global description was given in Hassell-Wunsch \cite{HW2008} where the semiclassical asymptotics of the scattering matrix were unified with the singularities of the scattering matrix at fixed frequency (i.e. the Melrose-Zworski result \cite{MZ1996}).  These results are explained in Sections \ref{sec:dynamics} and \ref{sec:evals} below.

Asymptotics of phase shifts, i.e. the logarithms of the eigenvalues of the scattering matrix, were analysed by  Birman-Yafaev \cite{BY1980, BY1982, BY1984, BY1993}, Sobolev-Yafaev \cite{SY1985}, Yafaev \cite{Yafaev1990} and more recently Bulger-Pushnitski \cite{BP2011}. In \cite{SY1985}, an asymptotic form $V \sim c r^{-\alpha}$, $\alpha > 2$ was assumed and asymptotics of the individual phase shifts as well as the scattering cross section were obtained. In this paper, the strength of the potential and the energy were allowed to vary independently, so that the result includes the semiclassical limit as in the present paper. In the other papers listed above, the context was scattering theory for a fixed potential. In this setting, the scattering matrix $S(\lambda)$ tends in operator norm to the identity as $\lambda \to \infty$ so the phase shifts tend to zero uniformly. The asymptotics of individual phase shifts for a fixed energy, and also the high-energy asymptotics, were analyzed.

In the 1990s Doron and Smilansky studied  the pair correlation for phase shifts,
in particular proposing that the pair correlations should behave
statistically similarly to the (conjectural) pair correlations for
eigenfunctions of a closed quantum system: that is, the pair
correlations for chaotic systems should be the same as for certain
ensembles of random matrices, while for completely integrable systems,
they should be Poisson distributed (the `Berry-Tabor conjecture'); see
for example  \cite{Sm1992}. In \cite{ZZ1999}, Zelditch and Zworski
analyzed the pair correlation function for eigenvalues of the
scattering matrix associated to a rotationally invariant surface with
a conic singularity and a cylindrical end. They showed that a full
measure set of a  2-parameter family of such surfaces obeyed Poisson
statistics, agreeing with Smilansky's conjecture.

In a different setting Zelditch \cite{Z1997} analyzed quantized contact transformations,  which are families of unitary maps on finite dimensional spaces with dimension $N \to \infty$. He proved under the assumption that the set of periodic points of the transformation has measure zero, that the eigenvalues of these unitary operators becomes equidistributed as $N \to \infty$. 
After reading a draft of the current paper, Zelditch pointed out to the authors that a similar strategy could be used in the context of semiclassical potential scattering to prove equidistribution. In fact, this strategy is likely to be a more direct approach to proving equidistribution than the one we employ here. On the other hand, our approach has several advantages: it also gives approximations to the individual phase shifts, up to an $O(h)$ error (see Theorem~\ref{thm:eigenvalue}), and in addition appears to be a better method for obtaining a rate of equidistribution, as in Theorem~\ref{thm:disk} above. 

In future work, we plan to treat non-central potentials (or perhaps, following the suggestion of one of the referees, black box perturbations of the free Laplacian --- see \cite{SZ1991, PZ2001}) as well as non-compactly supported potentials. In the latter case, \cite{SY1985} gives some indication of what to expect; in particular, the scaling with $h$ cannot be the same as in the compactly supported case.

\noindent \textbf{Reduction to $E = 1$:}   In light of
\eqref{eq:1} and \eqref{eq:separated}, 
\begin{equation}
  \label{eq:3}
  S_{h,V}(E) = S_{\wt{h}, \wt{V}}(1),
\end{equation}
where $\wt{h} = h / \sqrt{E}$ and $\wt{V} = V / E$.  Here $S_{h,V}(E)$
denotes the scattering matrix of $h^{2}\Delta + V - E$, and
$S_{\wt{h}, \wt{V}}(1)$ denotes the scattering matrix of
$\wt{h}^{2}\Delta + \wt{V} - 1$.  For the remainder of the
paper, we assume without loss of generality that $E = 1$.

\section{Dynamics}\label{sec:dynamics}
We now review some standard material on Hamiltonian dynamics for central potentials. Consider first the case the dimension $d = 2$.

The classical Hamiltonian corresponding to our quantum system is 
$$
|\xi|^2 + V(r) - 1
$$
or in polar coordinates, using $(r, \varphi)$ and dual coordinates $(\rho, \eta)$, 
$$
H = \rho^2 + \frac{\eta^2}{r^2} + V(r) - 1,
$$
and the Hamilton equations of motion are 
%\begin{equation}\begin{aligned}
%\dot r &= 2\rho \\
%\dot \varphi &= 2\frac{\eta}{r^2} \\
%\dot \rho &= -  V'(r) + 2\frac{\eta^2}{r^3} \\
%\dot \eta &= 0.
%\end{aligned}\label{eom}\end{equation}
\begin{equation}
\begin{aligned} 
\dot r &= 2\rho \\
\dot \rho &= -  V'(r) + 2r^{-3} \eta^2  \end{aligned}
\qquad \begin{aligned}
\dot \varphi &= 2\frac{\eta}{r^2} \\
\dot \eta &= 0.
\end{aligned}\label{eom}\end{equation}
The invariance of the Hamiltonian under rotations is reflected in the
conservation of angular momentum $\eta = 2 r^2 \dot \varphi$. For a given bicharacteristic, this is the
minimum value of $r$ along the free ($V \equiv 0$) bicharacteristic that agrees with
the given one as $t \to -\infty$ (we could just as well take $t \to
+\infty$ since it is a conserved quantity).

 \begin{figure}[htbp]
 \includegraphics[width=100mm]{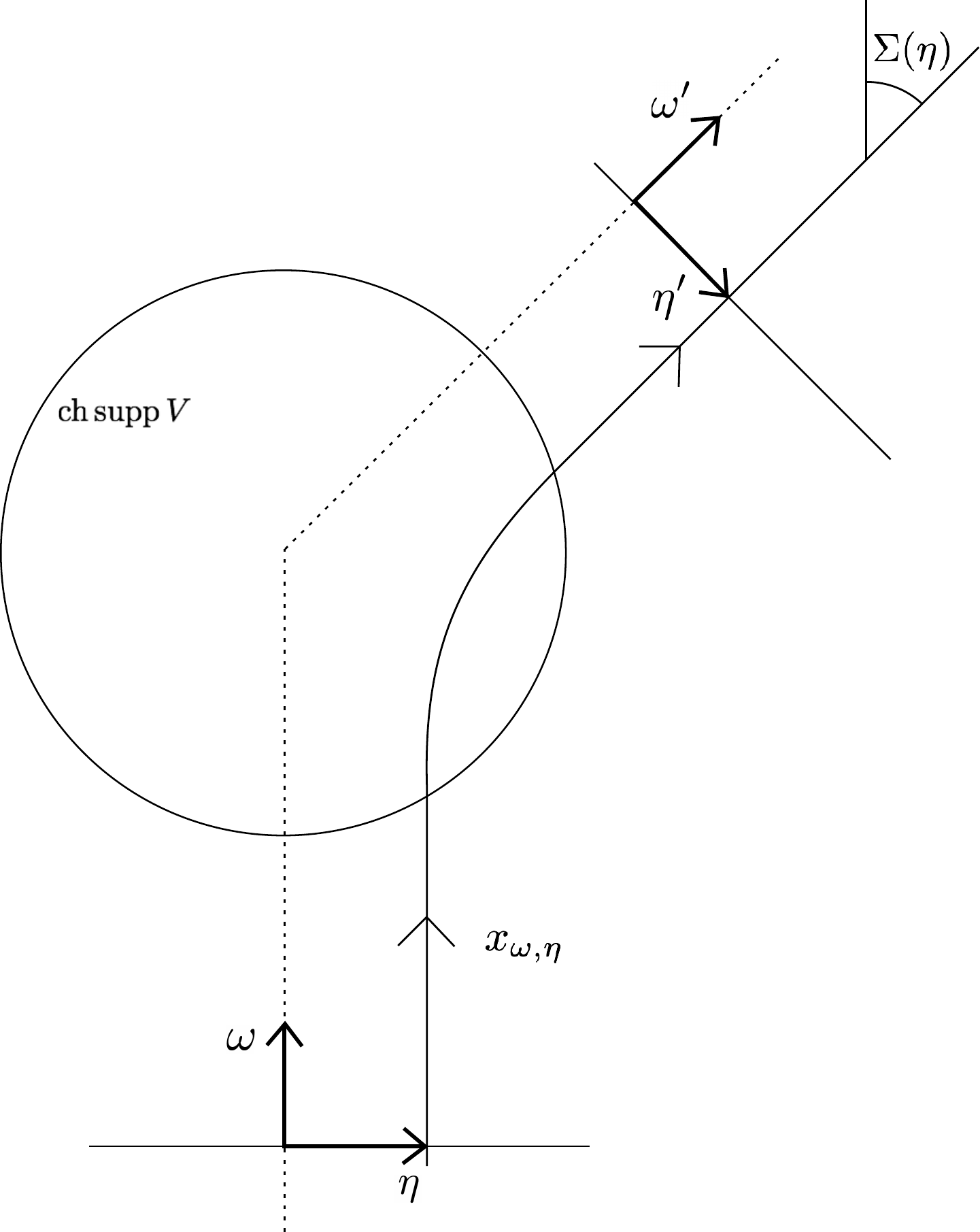}
 \caption{Here $x_{\omega, \eta}$ is the classical trajectory equal to
     $\eta + t\omega$ for $t << 0$.  The scattering angle $\Sigma(\eta)$
     is the angle between the outgoing direction $\omega'$ and the
     incoming direction $\omega$.  Note that $\absv{\eta} =
     \absv{\eta'}$ by conservation of angular momentum since the
     potential $V$ is central, $V = V(r)$.} \label{fig:scatteringangle}
 \end{figure}

Notice that in the case of general dimension $d$, each bicharacteristic lies entirely in a two-dimensional subspace, so the above discussion in fact includes the general case.

The scattering matrix is related to the asymptotic properties of the
bicharacteristic flow. Geometrically this information is contained in
a submanifold $$L \subset T^{*}\mathbb{S}^{d-1} \times
T^{*}\mathbb{S}^{d-1} \times \mathbb{R}$$ that we define
now. Returning to the case of general dimension $d$,
we identify $\mathbb{S}^{d -1}$ with the unit sphere in
$\mathbb{R}^{d}$ and identify the cotangent space
 $T^{*}_{\omega}\mathbb{S}^{d - 1}$ with
the orthogonal hyperplane $\omega^\perp$ to $\omega$.  
Given $\omega$ and $\eta \in
T^{*}_{\omega}\mathbb{S}^{d - 1}$,
take the unique bicharacteristic ray whose projection
$x_{\omega,\eta}(t)$ to
$\mathbb{R}^{d}$ 
is given by $\eta + t\omega$ for $t < < 0$. Define $(\omega', \eta')$ by  
\begin{equation}\label{eq:Cdimd}
  \begin{split}
    \omega'(\omega, \eta) &= \lim_{t \to \infty}  x(t) / \absv{x(t)} \\
    \eta'(\omega, \eta) &= \lim_{t \to \infty} x(t) - \la x(t),
    \omega' \ra \omega' 
  \end{split}
\end{equation}
and $\tau(\omega, \eta)$ to be the \textbf{sojourn time} or \textbf{time delay} for $\gamma$; this is by definition the limit
\begin{equation}
  \label{eq:sojourn}
  \lim_{a \to \infty} t_{1}(a) - t_{2}(a) - 2a = \tau(\omega, \eta),
\end{equation}
where $t_{1}(a)$ is the smallest time, $t$, for which $r(t) = a$
and $t_{2}(a)$ is the largest.  We then define $L$ to be the submanifold 
\begin{equation}
  L := \set{(\omega, \eta, \omega'(\omega, \eta), - \eta'(\omega, \eta), \tau).}
\label{Legendrian}\end{equation}
As shown in \cite{HW2008}, $L$ is a Legendrian submanifold of $T^{*}\mathbb{S}^{d-1} \times
T^{*}\mathbb{S}^{d-1} \times \RR$ with respect to the contact
form $\chi + \chi' - d\tau$, where $\chi$ is the standard contact form
on $T^* \mathbb{S}^{d-1}$, given in any local coordinates $x$ and dual
coordinates $\xi$ by $\xi \cdot dx$.  Note that the projection of $L$
to $T^{*}\mathbb{S}^{d-1} \times
T^{*}\mathbb{S}^{d-1}$ is Lagrangian with respect to the standard symplectic form. 
Indeed it is the graph of a symplectic transformation $(\omega, \eta)
\mapsto (\omega', \eta')$, and the scattering matrix is a semiclassical Fourier integral operator associated to this symplectic graph \cite{A2005}, \cite{HW2008}. The sojourn time,
however, carries extra information and is directly related to
high-energy scattering asymptotics as observed in \cite{Maj1976},
\cite{G1976}, \cite{HW2008}.

% Notice that if $R$ is the radius of $\chs V$, then
% \begin{equation*}
%   \mbox{if $\eta > R$,  } (\omega, \eta, \omega', - \eta', \tau) \in L \iff
%   \omega = \omega' \mbox{ and } \eta = \eta', \tau(\absv{\eta}) =
%   \phi_{V}(\absv{\eta}) = 0,
% \end{equation*}
% since the relevant bicharacteristics do not interact with the potential.

The previous paragraph applies to any potential, central or not. We
return to the case of a central potential $V$, for which, as observed
above, the dynamics take place in a two-dimensional subspace, so we
can assume $d=2$ without loss of generality. In that case we use the
angular variables $\varphi, \varphi'$ in dimension $d=2$ instead of
$\omega, \omega'$ above. Consider a bicharacteristic $\gamma$ with angular
momentum $\eta \in \RR$, initial direction $\varphi \in
\mathbb{S}^1$ and final direction $\varphi' \in \mathbb{S}^1$. 
The scattering angle $\Sigma(\eta)$ determined by $V$ is, by definition,  the angle
between the initial and final directions of $\gamma$,  normalized
so that $\Sigma$ is continuous and $\Sigma(\eta) = 0$ for $\eta > R$, i.e. 
\begin{equation}
\Sigma(\eta) = \varphi'(\varphi, \eta) - \varphi, \quad \Sigma(\eta) = 0 \text{ for } \eta > R. 
\label{Sigma}\end{equation}
See Figure \ref{fig:scatteringangle}.  Note that $\Sigma$ is
independent of $\varphi$ by rotational invariance  of $V$. 
In the central case, there is a standard expression for $\Sigma(\eta)$ in terms of the potential (see for example \cite[Section 5.1]{Newton1966}), that we now derive. 
 Indeed, if $V$ is central and non-trapping at energy $1$,
then along a bicharacteristic, the functions $\rho$ and $\dot \rho$ do
not have simultaneous zeros.  For if there were such a time, and
the value of $r$ at this time were $r_{0}$, then $r(t) \equiv r_{0}$
would be a bicharacteristic, contradicting the non-trapping
assumption.  Hence the zeros of the function $1 -
\eta^{2} / r^{2} - V(r)$ are simple on the region of
interaction \eqref{intregion}. 
  Given a fixed bicharacteristic, let $r_{m}$ be the minimum value of
  $r$; note that $r_m$ is a function only of the angular momentum $\eta$. We denote the derivative of $r_m$ with respect to $\eta$ by $r_m'(\eta)$.   By symmetry, $r_{m}$ is the unique
value of $r$ along the trajectory at which $\rho = 0$ and $\dot \rho > 0$, so we can divide the
bicharacteristic `in half,' and consider only times when $r > r_{m}$
and $\rho \ge 0$.  For such times $r$ is a strictly monotone function of $t$, and  we have 
\begin{align*}
   \frac{d\varphi}{dr} &= \frac{d\varphi}{dt} \frac{dt}{dr} =  \frac{\eta}{r^{2}} \frac1{\rho} = \frac{\eta}{r^{2} \sqrt{1 - \eta^{2}/r^{2} - V(r)}}.
\end{align*}
By the simplicity of the zeros in the denominator, we can integrate to
obtain, for $\eta > 0$, 
\begin{align}\label{eq:integral}
    \Sigma(\eta) &= \pi -  2 \int_{r_{m}}^{\infty} \frac{\eta}{r^{2} \sqrt{1
      - \eta^{2}/r^{2} - V(r)}}dr.
\end{align} 
%(See Figure \ref{fig:scatteringangle}. The change in $\theta$ is the angle between the incoming and outgoing trajectories.)

  The sojourn time is also independent of
$\varphi$, and we write
\begin{equation}
T(\eta) = \tau(\varphi, \eta).
\label{T}\end{equation}
Notice that both $\Sigma$ and $T$ depend only on $\eta$ in the central case.  The fact that $L$ in \eqref{Legendrian} is Legendrian
then implies the following relation between these functions:
\begin{equation}
d\tau = \eta \cdot (d\varphi - d\varphi') \implies \frac{d}{d\eta}
T(\eta) = -\eta  \frac{d}{d\eta}{\Sigma}(\eta). 
\label{tau-eta}\end{equation}

\begin{remark} Notice that the ambiguity of $\Sigma$ modulo $2\pi$ is eliminated by our convention that $\Sigma(\eta) = 0$ for $\eta > R$. We point out that by reflection symmetry, we have $\Sigma(\eta) = - \Sigma(-\eta)$ modulo $2\pi$, but it might not be the case that $\Sigma(\eta) = - \Sigma(-\eta)$ on the nose: this will happen if and only if $\Sigma(0) = 0$, which will  be the case if and only if the interaction region is the whole of $\RR^d$.  However, we always have $\Sigma'(\eta) = \Sigma'(-\eta)$, which shows that $T'(\eta)$ is an odd function, and hence $T(\eta)$ is even in $\eta$. 
\end{remark}

%%%%%%%%%%%%%%%%%%%%%%%%%%%%%%%%%%%
%%%%%%%%%%%%%%%%%%%%%%%%%%%%%%%%%%%
%%%%%%%%%%%%%%%%%%%%%%%%%%%%%%%%%%%

\section{Asymptotic for the eigenvalues of $S_{h}$}\label{sec:evals}
In this section we prove Theorem~\ref{thm:eigenvalue}, that is, the
error bound \eqref{eq:eigenvalueh} for the asymptotics of the
eigenvalues $e^{i\beta_{l,h}}$ of $S_{h}$.  To do this, we will use a
Fourier integral approach. One could also directly attack \eqref{eq:1}
using ODE methods; see Remark~\ref{FIOvsODE} for further discussion on
this point. 

We use the fact, proven in
\cite{HW2008}, \cite{A2005},
that the integral kernel of $S_{h}$ is an oscillatory integral
associated (in a manner we describe directly) to the Legendre submanifold $L$ in \eqref{Legendrian}. To be precise, the Schwartz kernel of $S_{h}$ can be decomposed
following \cite[Prop. 15]{HW2008}
(with minor changes in notation) as
\begin{equation*}
  S_{h} = K_{1} + K_{2} + K_{3},
\end{equation*}
with the $K_{i}$ as follows.  

Fix $R_2 > R_1 > R$.
First, $K_2$ is a pseudodifferential operator of order zero (both in the sense of semiclassical order and differential order), microsupported in $\{ |\eta| > R_1\}$, hence taking the form in local coordinates $z$ on $\mathbb{S}^{d-1}$
$$
(2\pi h)^{-(d-1)} \int e^{i(z - z') \cdot \zeta/h} b(z, \zeta, h) \, d\zeta
$$
for some smooth symbol $b(z, \zeta,h)$ equal to zero for
$|\zeta|_{g(z)} < R_1$ where $| \cdot |_{g(z)}$ is the standard norm on
$T_z^*\mathbb{S}^{d-1}$.  This reflects
the fact that the Legendrian submanifold $L$ in \eqref{Legendrian} is
the diagonal relation $\omega = \omega', \eta = -\eta', \tau = 0$ for
$|\eta|, |\eta'| > R$, to which pseudodifferential operators are
associated. 
Moreover, $K_2$ is microlocally equal to the identity for $|\eta| >
R_2$, i.e. $b = 1 + O(h^\infty)$ for $|\zeta|_g > R_2$. Indeed, the full symbol (up to $O(h^\infty)$) of the
scattering matrix is determined by transport equations along the rays
with $|\eta| > R$. 
Since these transport equations are identical to those for the zero potential, the scattering matrix in this microlocal region is microlocally identical to that for the zero potential, which is the identity operator.

Next, $K_1$ is a semiclassical Fourier integral operator of semiclassical order 0 with compact microsupport in $\set{ \absv{\eta} < R_2 }$. That is, $K_1$ is given by a sum of terms taking the form in local coordinates 
\begin{equation}\label{eq:u1}
  K_{1}(\omega, \omega', h)  = h^{-(d - 1)/2 - N/2} \int_{\mathbb{R}^{N}}
  e^{i \Phi(\omega, \omega', v)/h} a(\omega, \omega', v, h) dv
  \end{equation}
  with respect to a suitable phase function $\Phi$ and smooth compactly supported function $a$. Here the phase function parametrizes $L$ locally, meaning
\begin{enumerate}
\item On the set $\Crit \Phi := \set{(\omega, \omega', v):
    D_{v}\Phi(\omega, \omega', v) = 0}$, $D_{\omega,
    \omega', v} \Phi$ has rank $N$.  This implies that
    \begin{equation}\label{eq:phaseparam}
    	L(\Phi) := (\omega, D_{\omega}\Phi(\omega, \omega', v),
  \omega' , D_{\omega'}\Phi(\omega, \omega', v), \Phi(\omega, \omega',
  v))
	\end{equation}
is a smooth submanifold.
\item 
$L(\Phi) =   L$ at points for which $ a \neq O(h^{\infty})$.
%\begin{comment}
%\Crit \Phi  \ni (\omega, \omega', v) \mapsto (\omega, D_{\omega}\Phi(\omega, \omega', v),
%  \omega' , D_{\omega'}\Phi(\omega, \omega', v), \Phi(\omega, \omega',
%  v))
%  \label{Crit-L}\end{equation} maps to $L$ and is a local diffeomorphism.
%  \end{comment}
\end{enumerate}
By $K_{1}$ having compact microsupport in the set $\set{ \absv{\eta} < R_2 }$, we mean specifically that if
$(\omega, \eta, \omega'
, \eta', \tau)$ 
$\in L$ has $\absv{\eta} = \absv{\eta}' > R_2$ and \ $(\omega, \omega',
v) \in \Crit (\Phi)$ with $(D_{\omega,
  \omega'}\Phi(\omega, \omega', v_{i}), \Phi )= (\eta, \eta', \tau)$, then
$a(\omega, \omega', v,h) = O(h^\infty)$ in a neighbourhood of $(\omega, \omega',
v)$.

%Since the scattering matrix is an FIO associated to a canonical
%relation, it follows from \cite[thm 4.2.1, thm 4.2.3]{H1971} that the principal symbol of $K_1 + K_2$ has norm 1, i.e. lies on the unit circle, as a multiple of the Liouville half-density in either the left or the right factor of $T^* \mathbb{S}^{d-1}$ (this makes sense regardless of Maslov factors). It follows from this that the sum of $a(\omega, \omega', v, 0) |_{\Crit \Phi}$, pushed forward to $L$ via the map \eqref{Crit-L}, and the principal symbol of $K_2$ lies on the unit circle (and is equal to $1$ for $|\eta| > R$). 

Finally, $K_3$  is a kernel in $\dot{C}^{\infty}(\mathbb{S}^{d -1} \times \mathbb{S}^{d
  -1} \times [0, h_{0}))$, i.e.\ smooth
and vanishing to all orders at $h = 0$.  

For the proof of Theorem \ref{thm:eigenvalue} we need to know the
principal symbol of $K_1$ as a semiclassical FIO.  By
\eqref{eq:phaseparam}, the canonical relation of $K_{1}$, $C$, is the
projection of $L$ off the $\mathbb{R}$ factor, i.e.\ onto
$T^{*}\mathbb{S}^{d-1} \times T^{*}\mathbb{S}^{d-1}$.  Precisely, with 
notation as in \eqref{Legendrian}, 
\begin{equation}
  \label{eq:canonicalrelation}
  C = \set{ (\omega, \eta, \omega', -\eta')}. 
\end{equation}

\begin{lemma}\label{lem:prsymb1} The Maslov bundle of the canonical
  relation $C$ of the FIO $K_1$ is canonically trivial, and with
  respect to this canonical trivialization, the principal symbol of
  $K_1$ is equal to $1$, as a multiple of the Liouville half-density
  on $C$ coming from either the left or right projection of $C$ to
  $T^* \mathbb{S}^{d-1}$.  That is to say, 
  \begin{equation}
    \label{eq:principalsymbolgeneral}
    \sigma(K_{1})(\omega, \eta, \omega', -\eta') = \absv{ d \omega\,
      d\eta}^{1/2} = \absv{d\omega'\, d\eta'}^{1/2} ,
  \end{equation}
for $(\omega, \eta, \omega', -\eta') \in C$ such that $\absv{\eta} \le
R_{1}$. (The two half-densities in \eqref{eq:principalsymbolgeneral}
are equal on $C$ since $C$ is a Lagrangian submanifold.)
\end{lemma}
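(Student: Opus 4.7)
The plan is to prove the two assertions -- triviality of the Maslov bundle and the value of the principal symbol -- separately, both by exploiting the graph structure of $C$ and by reducing to the spherical-harmonic decomposition of $S_{h}$.

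For the Maslov bundle, the key point is that $C$ is the graph of the scattering symplectomorphism $\chi : (\omega, \eta) \mapsto (\omega', \eta')$ of $T^{*}\mathbb{S}^{d-1}$. Consequently both projections $\pi_{L}, \pi_{R} : C \to T^{*}\mathbb{S}^{d-1}$ are global diffeomorphisms. A graph Lagrangian has no caustics with respect to either fibration, so the vertical polarisation in either factor yields a canonical trivialisation of the Maslov line bundle over $C$. Moreover, in this trivialisation the sections $\pi_{L}^{*}|d\omega\, d\eta|^{1/2}$ and $\pi_{R}^{*}|d\omega'\, d\eta'|^{1/2}$ are smooth and globally defined on $C$, and they coincide because $\chi$ is symplectic and hence preserves the Liouville volume. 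This justifies the two equal expressions on the right side of \eqref{eq:principalsymbolgeneral}.

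For the principal symbol, I would first note that in the microlocal region $|\eta| > R_{2}$ the symplectomorphism $\chi$ is the identity, $C$ is the diagonal, and the full symbol of $S_{h}$ is $1 + O(h^{\infty})$ -- with our normalisation the free Laplacian has scattering matrix equal to the identity. All this contribution is carried by $K_{2}$, so $\sigma(K_{1}) \equiv 0$ there. To establish $\sigma(K_{1}) = |d\omega\, d\eta|^{1/2}$ on $C \cap \{|\eta| \le R_{1}\}$, where $K_{2}$ does not contribute, I would apply $K_{1}$ to a semiclassical coherent state centred at $(\omega_{0}, \eta_{0})$ with $|\eta_{0}| \le R_{1}$, built from spherical harmonics $Y_{l}^{m}$ with $l \approx |\eta_{0}|/h$. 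By \eqref{eq:separated}, each such $Y_{l}^{m}$ is an eigenfunction of $S_{h}$ with eigenvalue $c(l, h)$ of modulus one; the phase of $c(l, h)$ is absorbed into the phase function $\Phi$ of $K_{1}$ (which parametrises $L$, including the sojourn time via \eqref{tau-eta}), while the amplitude equals unity to leading order. Comparing with the stationary-phase expansion of the oscillatory integral \eqref{eq:u1} extracts $\sigma(K_{1}) = |d\omega\, d\eta|^{1/2}$.

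The main technical obstacle is checking that no residual Maslov contribution or subleading phase factor corrupts the amplitude after the phase function absorbs the oscillatory behaviour of $c(l, h)$. This can be handled either by invoking the general formula from \cite{HW2008} for the principal symbol of a scattering FIO associated to the Legendrian $L$ and matching it explicitly against the Debye asymptotics of the Hankel functions in \eqref{eq:separated}, or by a direct unitarity-plus-continuity argument: $|\sigma(K_{1})|^{2} = 1$ as a density by unitarity of $S_{h}$, the value on $C \cap \{|\eta| > R_{2}\}$ is pinned to $1$ by the matching with $K_{2}$, and smoothness of $\sigma(K_{1})$ together with connectedness of the level sets of $\eta$ in $C$ then force $\sigma(K_{1}) = 1$ globally.
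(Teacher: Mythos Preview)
Your treatment of the Maslov bundle is close to the paper's: $C$ is a graph, so both projections are diffeomorphisms and the Liouville half-densities agree. The paper adds one step you gloss over, namely pinning the trivialisation to the canonical (pseudodifferential) one over $\{|\eta|>R\}$ and then extending by flatness of the Maslov bundle via parallel transport (using simple connectivity of $T^*\mathbb{S}^{d-1}$ for $d\ge 3$, and a retraction argument for $d=2$). This matters because the Maslov factor appearing later in the $d=3$ computation is measured relative to precisely this trivialisation.

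For the principal symbol, however, both of your proposed routes have gaps.

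Your unitarity-plus-continuity argument only yields $|\sigma(K_1)|=1$ on $\{|\eta|\le R_1\}$ together with $\sigma(K_1)=1$ on the subregion $\{R<|\eta|\le R_1\}$ where the dynamics are free. But a smooth unit-modulus function on a connected set, equal to $1$ on a proper open subset, need not be identically $1$: nothing prevents $\sigma(K_1)$ from winding around the circle as $|\eta|$ decreases below $R$. (Also, your phrase ``the value on $C\cap\{|\eta|>R_2\}$ is pinned to $1$'' cannot be right as written, since $K_1$ is $O(h^\infty)$ there.) The paper closes this gap with an ingredient you do not invoke: the principal symbol of $S_h$ arises as a boundary value of the principal symbol of the Poisson operator, which solves a \emph{real} transport equation with initial data $1$ and is therefore real. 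Hence $\sigma(K_1)$ is real up to Maslov factors, i.e.\ takes values in the eighth roots of unity; now discreteness plus smoothness plus the value $1$ near $|\eta|=R$ forces $\sigma(K_1)\equiv 1$.

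Your first route --- testing against coherent states built from spherical harmonics --- is essentially the computation carried out in the proof of Theorem~\ref{thm:eigenvalue}, but with the logic reversed: there Lemma~\ref{lem:prsymb1} is an \emph{input} that identifies the leading amplitude $a(-\Sigma(\alpha),\alpha,0)$ with $1$ and thereby determines $e^{i\beta_{l,h}}$. To run it in the other direction you would need an independent determination of the leading amplitude of $c(l,h)=e^{i\beta_{l,h}}$, which (as Remark~\ref{FIOvsODE} explains) would require rigorous WKB error bounds through possibly several turning points --- something the authors deliberately avoid. Without that, your assertion ``the amplitude equals unity to leading order'' is exactly the statement to be proved.
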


\begin{proof} Consider first the Maslov bundle of $C$. Notice that $C$ is almost the same as $L$; in fact, it is given by 
$$
C = \{ (\omega, \eta, \omega', - \eta') \mid  \exists \, \tau \text{ such that } (\omega, \eta, \omega', \eta', \tau) \in L \}.
$$
Since $C$ is a canonical graph (i.e.\ the graph of a
symplectomorphism), associated to the scattering relation as in
\eqref{eq:Cdimd}, it projects diffeomorphically to $T^*
\mathbb{S}^{d-1}$ via both the left and right projections, and the
lift of Liouville measure on $T^* \mathbb{S}^{d-1}$ via the left
projection agrees with the lift via the right projection (since $C$ is a Lagrangian submanifold
of $T^*
\mathbb{S}^{d-1} \times T^*
\mathbb{S}^{d-1}$ and the Liouville measure can be expressed in terms of the symplectic
form $T^*
\mathbb{S}^{d-1}$), providing a
canonical half-density on $C$. We also note that the scattering relation is the identity whenever $|\eta| \geq R$ since then the corresponding bicharacteristic is not affected by the potential. Therefore, over this part of $C$ there is a canonical trivialization of the Maslov bundle. Since the Maslov bundle is flat, we can use parallel transport to extend this to a global trivialization: in fact, in the case $d=2$, the space $T^* \mathbb{S}^1$ retracts to $\mathbb{S}^{d-1} \times \{ \eta > R \}$, while for $d \geq 3$, $T^* \mathbb{S}^{d-1}$ is simply connected, hence in either case parallel transport provides an unambiguous trivialization. 

We now consider the principal symbol of the scattering matrix. 
The scattering matrix may be viewed as a `boundary value' (after
removing a vanishing factor and an oscillatory term) of the Poisson
operator, as in \cite[Section 7.7 and Section 15]{HW2008}. The
principal symbol of the scattering matrix is correspondingly derived from the principal symbol of the Poisson operator. The principal symbol of the Poisson operator is real: it solves a real transport equation with initial condition $1$. Therefore, the principal symbol of the scattering matrix is real, up to Maslov factors, i.e. it is a real number times an eighth root of unity. On the other hand, unitarity of the scattering matrix shows that the principal symbol lies on the unit circle (as a multiple of the canonical half-density); hence it is an eighth root of unity. 
Finally, the principal symbol of the scattering matrix is equal to $1$ for $|\eta| \geq R$, since here the scattering matrix is microlocally equal to the scattering matrix for the zero potential, which is certainly equal to $1$. Since the principal symbol is smooth, is restricted to eighth roots of unity, and is $1$ for $|\eta| \geq R$, it follows that the principal symbol is equal to $1$ everywhere. 
\end{proof}

\begin{proof}[Proof of Theorem \ref{thm:eigenvalue}:]

First we reduce the problem to the cases $d = 2$ and $d = 3$ as follows.
Writing  $\beta_{l, h, d}$ for the eigenvalue $\beta_{l,h}$ in dimension $d$, observe that by \eqref{eq:1},
 \begin{equation}\label{eq:dimensiondiff}
\beta_{l, h, d + 2k} = \beta_{l + k, h, d} \mbox{ for } d \geq 2, k \geq 0. 
\end{equation}
It follows that for $d \geq 4$ even, we have $\beta_{l, h, d} = \beta_{l + (d-2)/2, h, 2}$ and for $d \geq 5$ odd, we have $\beta_{l, h, d} = \beta_{l + (d-3)/2, h, 3}$.

Consider the case dimension $d=2$. For any smooth function $G \colon \mathbb{R} \lra \mathbb{R}$, the function
\begin{equation}
\Phi(\varphi, \varphi', v) = (\varphi - \varphi') v + G(v),\label{eq:phi2d}
\end{equation}
parametrizes the Legendrian (see \eqref{eq:phaseparam})
\begin{equation}\label{eq:localparamer}
L(\Phi) := \big\{ (\varphi, \eta, \varphi', \eta', \tau) \mid \eta = v = -\eta', \varphi' - \varphi = \frac{dG}{dv}(v), \ \tau = -v \frac{dG}{dv}(v)  + G(v) \big\}.
\end{equation}

With $G$ as in \eqref{eq:galpha}, this gives an explicit global parametrization of the Legendrian submanifold $L$ in
\eqref{Legendrian} if we take $\varphi \in [0, 2\pi], \varphi' \in
\mathbb{R}$.  In this case the relation between $\tau$ and $\Sigma$
given by the last equation in \eqref{eq:localparamer} is
$$
\tau =  -\eta \Sigma(\eta) + G(\eta) \implies \frac{d}{d\eta} \tau =  -\eta \frac{d}{d\eta} \Sigma(\eta),
$$
in agreement with \eqref{tau-eta}. Therefore, plugging \eqref{eq:phi2d} into \eqref{eq:u1}, the operator $K_1$ takes the form 
$$
K_{1}(\varphi, \varphi', h) = (2\pi h)^{-1} \int_{\RR} e^{i\lp(\varphi
  - \varphi')v + G(v)\rp/h} a(\varphi - \varphi',  v, h) \, dv, \quad
\varphi \in [0, 2\pi], \varphi' \in \mathbb{R},
$$
where $a$ is smooth and supported in $|v| \leq R_2$. %, and such that $\absv{a(\varphi - \varphi',  v, 0)} = 1$ when $(\varphi, \varphi', v) \in \Crit(\Phi)$. 
Notice that we may assume that $a$ depends only on $(\varphi - \varphi', v,h)$ since the scattering matrix and the phase function both have this property.

Now we obtain an expression for the eigenvalue $e^{i\beta_{l, h}}$ of the scattering matrix $S_h$ on $Y_l = (2\pi)^{-1/2} e^{il\varphi}$ using
\begin{equation}
e^{i\beta_l, h} = \la S_{h}Y_{l}, Y_{l} \ra = \la K_{1}Y_{l}, Y_{l} \ra + \la K_{2}Y_{l}, Y_{l} \ra + \la K_{3}Y_{l}, Y_{l} \ra.  
\label{betaK}\end{equation}
  Clearly $\la K_{3}Y_{l}, Y_{l} \ra = O(h^\infty)$. Consider the $K_1$ term. 
  Writing $l = \alpha/h$ gives
  $$
  \la K_{1}Y_{l}, Y_{l} \ra = (2\pi h)^{-1}  (2\pi)^{-1}
  \int_{\mathbb{R}} \int_{0}^{2\pi} \int_{\mathbb{R}} e^{i\lp(\varphi
    - \varphi')v + G(v) - \alpha(\varphi - \varphi')\rp/h} a(\varphi -
  \varphi', v, h) \, dv\, d\varphi \, d\varphi'.  
 $$
Changing integration variables to $(\varphi,  \tilde\varphi = \varphi - \varphi')$, the
kernel is independent of the first of these variables, so that
integrating in it simply removes the factor $2\pi$. We are left with 
 $$
  \la K_{1}Y_{l}, Y_{l} \ra = (2\pi h)^{-1}  \int_{\mathbb{R}} \int_{\mathbb{R}} e^{i\lp\tilde\varphi v + G(v) - \alpha \tilde\varphi \rp/h} a(\tilde\varphi, v, h) \, dv \, d\tilde\varphi .  
 $$
 The phase is stationary at the point $v = \alpha$, $-\tilde\varphi = G'(v) = \Sigma(v)$ and the stationary phase lemma shows that the integral is equal to 
\begin{equation}
  \la K_{1}Y_{l}, Y_{l} \ra = e^{i G(\alpha)/h} a(-\Sigma(\alpha), \alpha, 0)+ O(h)
\label{eq:sp1}\end{equation}
 (noting that the Hessian of the phase function has determinant $1$
 and signature $0$).

Next we write
$$
\la K_{2}Y_{l}, Y_{l} \ra = (2\pi h)^{-1} (2\pi)^{-1} \int e^{i(\varphi - \varphi') v/h} b(\varphi,
v, h) e^{-i\alpha (\varphi - \varphi')/h}  \, dv \,
d\varphi \, d\varphi'.
$$
Here, the phase is stationary when $\alpha =v$. However, $b$ is
supported where $|v| \geq R_1 > R$ while $\alpha \leq R$ by
hypothesis, so there are no stationary points on the support of the
integrand. It follows that $\la K_{2}Y_{l}, Y_{l} \ra = O(h^\infty)$.
Thus by \eqref{betaK}
\begin{equation}
e^{i\beta_{l, h}} = e^{i G(\alpha)/h}a(-\Sigma(\alpha), \alpha, 0)
+ O(h).
\label{prebetalh}\end{equation}
%Rotational invariance requires that $b$ is independent of $\varphi$. So
%we change variables from $(\varphi, \varphi')$ to $(\varphi, \varphi'' =
%\varphi - \varphi')$ as before, and perform the $\varphi$ integral,
%cancelling the prefactor of $(2\pi)^{-1}$. A stationary phase
%computation similar to the previous one gives a contribution 
%$$
%b(\alpha, 0) + O(h) = e^{i G(\alpha)/h} b(\alpha, 0) + O(h)
%$$
%with the equality following from the fact that $G = 0$ on the support
%of $b$. 

The principal symbol of $K_{1}$ as an FIO is given as a multiple of the Liouville half-density on $T^* \mathbb{S}^1$, $ |d\varphi \, d\eta|^{1/2}$, by  \cite[Section 3]{H1971}
\begin{equation}
\sigma(K_1)(\varphi, \eta, \varphi + \Sigma(\eta), -\eta) = a(-\Sigma(\eta), \eta, 0) |d\varphi \, d\eta|^{1/2}.
\label{eq:princsymbd=2}\end{equation}
Indeed, the density $d_{C}$ defined on page 143 of that paper equals
$\absv{d\varphi \, d\eta}$, where we used coordinates $(x, \theta) = (\varphi,
\varphi', v)$.  The principal symbol is the image of the map from $C$ to
$\Lambda$ defined immediately following the definition of $d_{C}$, in
the notation of that paper.  In the notation of the current paper, $C
= \mbox{Crit}(\Phi)$ and $\Lambda$ is the projection of the Legendrian
$L$ onto the first four coordinates, i.e.\ it is $C$ from \eqref{eq:canonicalrelation}.  It
follows from equation~\eqref{eq:princsymbd=2} and equation \eqref{eq:principalsymbolgeneral} that
\begin{equation}
 a(-\Sigma(\eta), \eta, 0)  = 1.
 \label{eq:F2}\end{equation}
Combining \eqref{eq:F2} and \eqref{eq:sp1}, we see that 
\begin{equation}
e^{i\beta_{l, h}} = e^{i G(\alpha)/h} 
+ O(h), 
\label{betalh}\end{equation}
establishing  \eqref{eq:eigenvalueh}.

%\begin{remark}
%There is no Maslov factor in \eqref{eq:princsymbd=2}, for the following two reasons. 
%First, the Maslov bundle over $L$ is canonically trivial. Indeed, the Legendrian $L$ is diffeomorphic to $(\varphi, \eta) \in \mathbb{S}^1 \times \RR$ and is canonically trivial over $\mathbb{S}^1 \times \{ \eta > R \}$ since the scattering matrix is pseudodifferential in this region. Since the Maslov bundle is flat, and the whole space retracts to $\mathbb{S}^1 \times \{ \eta > R \}$, this gives a global canonical trivialization.
%Second, the phase function used above agrees with the canonical parametrization for $v \geq R$, since then $G(v) = 0$. 
%\end{remark}

 We proceed to the case $d=3$. 
 In this case, we will obtain the
 eigenvalue $e^{i\beta_{l,h}}$ by pairing the scattering matrix $S_h$
 with the highest weight spherical harmonics
  $Y_{l}^{l}$.  These concentrate along a great circle $\gamma$, which
  we parametrize by arclength, $\varphi \in [0, 2\pi]$.  Choose Euclidean
  coordinates in $\RR^3$ so that the two-plane spanned by $\gamma$ is
  the plane $x_3 = 0$. Then $Y^l_l= c_{l}  (x_1 + i
  x_2)^l $ where $c_{l}$ is a normalization factor, equal to
  $(2\pi)^{-1/2}(\pi l)^{1/4}(1 + O(l^{-1}))$.   Let $\theta$ be the
  spherical coordinate equal to the angle with the positive $x_{3}$
  axis. Then we can write 
  \begin{equation}
    \label{eq:gaussianbeam}
    Y_{l}^{l}(\varphi, \theta) = c_{l} e^{il\varphi} (\sin \theta)^l = c_{l} e^{il\varphi} e^{-l g(\theta)}
  \end{equation}
  where $g(\theta) = - \log \sin \theta = (\theta - \pi/2)^2/2 + O((\theta - \pi/2)^4)$.

  In particular,  expression \eqref{eq:gaussianbeam} shows (and it is
  in any case well known) that the $Y^l_l$ concentrate semiclassically
  at the set $\{ \theta = \pi/2, \zeta = 0,
  \sigma = \alpha \}$ where $l = \alpha/h + O(1)$. Here we use
  coordinates $(\sigma, \zeta)$ dual to $(\varphi, \theta)$.  To
  compute the pairing \eqref{betaK} with $Y_{l}^{l}$ replacing $Y_{l}$, we first need
  to determine an oscillatory integral expression for $K_1$ that is
  valid in this microlocal region. (Note that the $K_2$ and $K_3$
  terms give an $O(h^\infty)$ contribution as before.) So choose
  $\alpha_0$ distance $\geq \epsilon$ from the set
  \eqref{badalphaset}.  As we will see, it suffices to find a local parametrization of $L$ in a neighbourhood of 
$$
\{ \theta = \theta' = \pi / 2, \ \zeta = \zeta' = 0,  \ \sigma = -\sigma' = \alpha_0, \ \varphi - \varphi' = \Sigma(\alpha_0) \};
$$
this is the set of incoming and outgoing data of bicharacteristics
with angular momentum $\alpha_0$ (see Section \ref{sec:dynamics})  which remain in the $x_{3} = 0$
plane.  To define this parametrization, we consider first a
parametrization in two dimensions locally near a bicharacteristic
with angular momentum $\eta = \alpha$. As we have seen such a two dimensional parametrization is $(\varphi - \varphi') v + G(v)$, for $v$ close to $\alpha$. 
We note that when $v = \alpha$,  $\varphi' - \varphi=\Sigma(\alpha)$, and we can write it in the form
\begin{equation}
\varphi' - \varphi = \pm \dist(\varphi, \varphi') + 2\pi k
\label{eq:pm}\end{equation}
for some integer $k$ (recalling that the distance $\dist(\varphi, \varphi')$ lies strictly between $0$ and $\pi$). We now claim that  a suitable phase function is 
 \begin{equation}
  \label{eq:phase3d}
  \Phi(\omega, \omega', v) =  (\mp\dist(\omega, \omega') - 2\pi k) v + G(v),
\end{equation}
where $v \in \mathbb{R}$ is localized near $\alpha_0$, $G(v)$ is as in \eqref{eq:galpha},  and the sign $\mp$ and the value of $k$ agree with the two-dimensional case. Indeed, on each two-plane, if we use spherical coordinates $(\overline\varphi, \overline\theta)$ adapted to that 2-plane then the form of the phase function agrees by construction with the two-dimensional phase function and therefore parametrizes that part of $L$ associated to that 2-plane (since the dynamics on each 2-plane is identical to the $d=2$ dynamics), that is, the subset (in the coordinates adapted to that 2-plane, indicated by a bar)
\begin{equation}
\{ \overline\theta = \overline\theta' = \pi/2, \overline\zeta = \overline\zeta' = 0, \overline\varphi' - \overline\varphi = \Sigma(\alpha), \overline\sigma = -\overline\sigma' = \alpha, \tau = T(\alpha) \}. 
\label{Lplane}\end{equation}

We now observe that we can eliminate $k$ by redefining $G(v)$ locally to be $G(v) + 2\pi k v$, which only has the irrelevant effect of changing $\Sigma$ by $2\pi k$ (notice also that this does not affect the eigenvalue formula  involving $e^{iG(lh)/h}$ in the statement of Theorem~\ref{thm:eigenvalue}). 
From here on we only work with the $+$ sign in \eqref{eq:pm}, i.e. the $-$ sign in \eqref{eq:phase3d}, and $k=0$. Notice that this  means that $0 < \varphi' - \varphi < \pi$ and $0 < \Sigma(\alpha) < \pi$, i.e. $\sin \Sigma(\alpha) > 0$. 
\
Returning to our spherical coordinates associated to the 2-plane $x_3 = 0$, we can use  the spherical cosine law applied to the spherical triangle with vertices  $(\varphi, \theta)$, $(\varphi', \theta')$, and the pole $x_3 = 1$:
$$
\cos \dist((\varphi, \theta),(\varphi', \theta')) =
\cos(\varphi-\varphi')  \sin \theta \sin \theta' + \cos \theta \cos
\theta'
$$
to write 
\begin{equation}
\Phi(\varphi, y, \varphi', y, v) = -\cos^{-1} \lp \cos(\varphi-\varphi')  \sin \theta \sin \theta' + \cos \theta \cos \theta' \rp v + G(v).
\label{cosinelaw}\end{equation}
We can then write in these coordinates
\begin{equation}\begin{aligned}
L = \Big\{ &(\varphi, \theta, \varphi', \theta', \sigma, \zeta, \sigma', \zeta', \tau) \mid \\
\sigma &= \p_{\varphi}\Phi = -\frac{v}{\sin \dist(\omega, \omega')} \lp
\sin(\varphi-\varphi') \sin \theta \sin \theta' \rp  \\
\zeta &= \p_{\theta}\Phi = \frac{v}{\sin \dist(\omega, \omega')}  \lp \cos(\varphi-\varphi') \cos \theta \sin \theta' - \sin \theta \cos \theta' \rp  \\
\sigma' &=  \p_{\varphi'}\Phi =\frac{v}{\sin \dist(\omega, \omega')} \lp
\sin(\varphi-\varphi') \sin \theta \sin \theta' \rp  \\
\zeta' &= \p_{\theta'}\Phi = \frac{v}{\sin \dist(\omega, \omega')}  \lp \cos(\varphi-\varphi') \cos \theta' \sin \theta - \sin \theta' \cos \theta \rp  \\
\tau &= \dist(\omega, \omega') v + G(v) \Big\} \quad 
\text{where } \dist(\omega, \omega')  =  G'(v). 
\end{aligned}\label{Lcoords}\end{equation}
(Notice that by direct inspection we see that this agrees with \eqref{Lplane} when $\theta = \theta' = \pi/2$, since then $\cos \theta = \cos \theta' = 0$ and $\dist(\omega, \omega') = \varphi' - \varphi = \Sigma(\alpha)$ and so $\sigma = v = \alpha$.) 

The scattering matrix, microlocalized to this region of phase space,  will then take the form 
\begin{equation}
(2\pi h)^{-3/2} \int e^{i\Phi(\omega, \omega',v)/h} a(\omega, \omega', v, h) \, dv. 
\label{vparam}\end{equation}

% We next observe that the family of  functions $Y^l_l$  semiclassically localize on the great circle $\gamma$, and where $\eta = 0$, $|\sigma| = lh$ using coordinates $(\sigma, \eta)$ dual to $(\varphi , y)$. In particular as long as $\alpha = lh$ is at least $\epsilon$ from the set \eqref{badalphaset},   then we can write the pairing \eqref{betaK} in terms of the parametrization \eqref{vparam} up to $O(h^\infty)$ errors (with implicit constants depending on $\epsilon$). 
 
% Note that the unitarity of the scattering matrix means that its principal symbol has norm $1$, as a multiple of the Liouville half-density on either the left or the right factors (these both lift to give the same half-density on the canonical relation, as it is the graph of a symplectic transformation). 
In terms of this parametrization   the principal symbol of \eqref{vparam}, say where both $\omega$ and $\omega'$ lie near the great circle $\gamma$ and hence where we can use coordinates $(\varphi, \theta, \varphi', \theta'; \sigma, \zeta, \sigma', \zeta', \tau)$,  is given at the point $(\varphi, \pi/2, \varphi + \Sigma(\alpha)h, \pi/2, \alpha, 0, -\alpha, 0, \tau(\alpha))$ by  \cite{H1971}
\begin{equation}
a(\varphi, \pi/2, \varphi + \Sigma(\alpha), \pi/2, \alpha, 0) e^{-i\pi/4} \big| ds \, d\theta\,  d\sigma\,  d\zeta \big|^{1/2} \Big|\det \frac{ \partial (\varphi, \theta, \sigma, \zeta, d_v \Phi)}{\partial (\varphi, \theta, \varphi', \theta', v)} \Big|^{-1/2}
\label{eq:princsymbd=3}\end{equation}
where the $e^{-i\pi/4}$ is a Maslov factor; see Remark~\ref{Maslovd=3} for more discussion about this. 
We need to compute the determinant above. We can disregard the repeated coordinates $(\varphi, \theta)$ and compute, using \eqref{Lcoords},  
\begin{equation}
\det \frac{ \partial (\sigma, \zeta, d_v \Phi)}{\partial (\varphi', \theta', v)} 
= \det \begin{pmatrix} 0 & 0 & -1 \\ 0 & \frac{-v}{\sin (\varphi'-\varphi)} & 0 \\
-1 & 0 & G''(v) \end{pmatrix} = \frac{v}{\sin(\varphi'-\varphi)} =
\frac{\alpha}{\sin \Sigma(\alpha)} \text{ at } \theta = \theta' = \frac{\pi}{2}.
\end{equation}
It follows that the principal symbol is 
\begin{equation}
a(\varphi, \pi/2, \varphi + \Sigma(\alpha), \pi/2, \alpha, 0) e^{-i\pi/4} \Big( \frac{\alpha}{\sin \Sigma(\alpha)} \Big)^{-1/2}  \big| ds \, d\theta\,  d\sigma\,  d\zeta \big|^{1/2}. 
\end{equation}
Then by equation \eqref{eq:principalsymbolgeneral}
\begin{equation}
a(\varphi, \pi/2, \varphi + \Sigma(\alpha), \pi/2, \alpha, 0) e^{-i\pi/4} = 
\Big( \frac{\alpha}{\sin \Sigma(\alpha)} \Big)^{1/2}  . 
\label{unitarity}\end{equation}

We next write the 
 contribution of $K_1$ to the expression \eqref{betaK} for the
 eigenvalue $e^{i\beta_{l,h}}$. Writing $l = \alpha/h$ and using
 \eqref{eq:gaussianbeam} we get  
\begin{equation}\label{eq:badphase}
  \begin{split}
  \la K_{1}Y^{l}_{l}, Y^{l}_{l} \ra &=  (2\pi h)^{-3/2} \int e^{i\Phi(\varphi, \theta, \varphi', \theta', v)/h}
    e^{-i\alpha(\varphi-\varphi')/h} \lp \frac{\alpha}{\pi h} \rp^{1/2}
    (2\pi)^{-1} \\ & \quad \qquad \qquad \times e^{-\alpha g(\theta)/h} e^{-\alpha g(\theta')/h} a(\varphi,
    \theta, \varphi', \theta', v, h) \, ds \, d\varphi' \, d\theta \,
    d\theta' \, 
    dv \lp 1 + O(h)\rp
  \end{split}
\end{equation}
Here the factors $(\alpha/\pi h)^{1/2} (2\pi)^{-1}$ are to normalize the functions $Y^l_l$ in $L^2$. 
We will analyze this using the stationary phase lemma with complex phase function, see
e.g. \cite[thm.\ 7.7.5]{Hvol1}.  Here the phase is 
\begin{equation}\label{eq:goodphase}
\Psi(\varphi, \theta, \varphi', \theta', v) = \Phi - \alpha(\varphi-\varphi') + i\alpha(g(\theta) + g(\theta')). 
\end{equation}
Notice that the integrand as a function of $(\varphi,\varphi')$ depends only on $\varphi-\varphi'$ by the rotational invariance of the scattering matrix, and the form of the $Y^l_l$ which take the form $e^{il\varphi}$ times a function of $\theta$. We change variable to $(\varphi, \tilde \varphi)$, $\tilde \varphi = \varphi -\varphi'$ and integrate out the variable $\varphi$, giving us a factor of $2\pi$. Then $\Psi$ has nondegenerate stationary points in the remaining variables $(\tilde \varphi, \theta, \theta', v)$. 
The imaginary part of the phase is
stationary only at $\theta = \theta' = \pi/2$, while stationarity of the real part
requires that $v = \alpha$ and $-\tilde \varphi = G'(v) = \Sigma(\alpha)$. 
The stationary phase lemma then gives us that \eqref{eq:badphase} is equal to 
\begin{equation}
  \label{eq:statphase}
  \begin{split}
& 2\pi  \lp (2\pi h)^{-3/2} \lp \frac{\alpha}{\pi h}\rp^{1/2}  (2\pi)^{-1}  \rp (2\pi h)^2 
\\ & \qquad \times \lp e^{iG(\alpha)/h} \frac1{\det (-iD^2
  \Psi)^{1/2}} a(\varphi, \pi/2, \varphi + \Sigma(\alpha), \pi/2, \alpha, 0) +
O(h) \rp.
\end{split}
 \end{equation}
 Here, to keep track of constants, we have written out all constants in \eqref{eq:badphase}; the first $2\pi$ comes from the integral in $\varphi$ and the $(2\pi h)^2$ comes from the leading term in stationary phase in the four variables $(\tilde \varphi, \theta, \theta', v)$. Simplifying the constants and using \eqref{unitarity} this is equal to 
\begin{equation}
  \label{eq:statphase2}
\lp\frac{2\alpha^2}{\sin \Sigma(\alpha)} \rp^{1/2}  
\Bigg(  \frac{e^{iG(\alpha)/h}   }{(\det -iD^2 \Psi)^{1/2}}  + O(h) \Bigg).
 \end{equation}

We will show that, in the above expression
\begin{equation}\label{eq:hessian}
\det  -i D^{2}\Psi (\varphi, 0, \varphi + \Sigma(\alpha), 0, \alpha) = \frac{2 i \alpha^{2}}{\sin
  \Sigma(\alpha)} e^{-i \Sigma(\alpha)}.
\end{equation}
Accepting this for the moment, we obtain from \eqref{eq:statphase}
$$
e^{i\beta_{l,h}} = e^{iG(\alpha)/h} e^{i\Sigma(\alpha)/2}  + O(h) . 
$$
Since $\Sigma(\alpha) = G'(\alpha)$ and $\alpha = lh$, this can be written 
\begin{equation}
e^{i\beta_{l,h}} = e^{iG((l+1/2)h)/h}  + O(h) 
\end{equation}
completing the proof of Theorem~\ref{thm:eigenvalue}.

It remains to prove the formula for the Hessian in
\eqref{eq:hessian}.  First we notice that when $\theta = \theta' = \pi/2$ we have, using the formula in \eqref{cosinelaw} for the distance function, in the coordinates $(\tilde \varphi, \theta, \theta')$, 
\begin{equation}
  \label{eq:distancehessian}
 D^2\dist(\varphi, \pi/2, \varphi + \Sigma(\alpha), \pi/2) =  \lp
  \begin{array}{ccc}
    0 & 0 & 0 \\
    0 & \cot \Sigma(\alpha) & - \csc \Sigma(\alpha) \\
    0 &-\csc \Sigma(\alpha)& \cot \Sigma(\alpha)  
      \end{array} \rp
\end{equation}
From this \eqref{eq:phase3d} and \eqref{eq:goodphase}, we conclude that in the $(v,\wt{\varphi}, y, y')$ coordinates
\begin{equation}
  \label{eq:phasehess}
  D^2 \Psi =  \lp \begin{array}{cccc}
  G''(v) & 1 & 0 & 0 \\
  1 & 0 & 0 & 0 \\
    0 &   0 & -\alpha \cot \Sigma(\alpha) + i \alpha &  \alpha
    \csc \Sigma(\alpha) \\
  0 &   0 & \alpha \csc \Sigma(\alpha)& -\alpha \cot
    \Sigma(\alpha)   + i \alpha
          \end{array} \rp . 
\end{equation}
Thus 
$$
\det -i D^{2} \Psi =  -\alpha^2 \Big( \cot^2 \Sigma(\alpha) - 2i \cot \Sigma(\alpha) - 1 - \csc^2 \Sigma(\alpha) \Big) = 
\frac{2i \alpha^{2}}{\sin \Sigma(\alpha)} e^{-i \Sigma(\alpha)}
$$ and \eqref{eq:hessian} holds. 
\end{proof}

\begin{remark}\label{Maslovd=3} The Maslov factor in
  \eqref{eq:princsymbd=3} and \eqref{unitarity} arises as
  follows. First, Lemma~\ref{lem:prsymb1} shows that the Maslov bundle
  over $L$ is canonically trivial. However, unlike in the case $d=2$,
  there is a nontrivial Maslov factor from comparing our phase
  function $\Phi$ above to one --- let us call it $\tilde \Phi$ ---
  that agrees with the canonical phase function, i.e. the
  pseudodifferential phase function, for $|\eta| \geq R$. By
  \cite[Theorem 3.2.1]{H1971}, the principal symbol written relative
  to $\Phi$ contains the Maslov factor $e^{i\pi\sigma/4}$ where
  $\sigma$ is the difference of signatures, 
$$
\sigma = \sgn D^2_{vv} \Phi - \sgn D^2_{\tilde w \tilde w} \tilde \Phi
$$
where $\tilde w = (\tilde w_1, \tilde w_2)$ are the phase variables for $\tilde \Phi$. 
A tedious computation shows that $\sigma = -1$, leading to the Maslov factor in \eqref{eq:princsymbd=3} and \eqref{unitarity}. (We remark that since $\Phi$ depends on one phase variable and $\tilde \Phi$ on two phase variables, by \cite[Equation (3.2.12)]{H1971} $\sigma$ is odd, so the Maslov factor cannot vanish in this case.) 
Of course, the Maslov factors are irrelevant to the question of equidistribution, but they are relevant to the question of determining the eigenvalues modulo $O(h)$. 
\end{remark}

\begin{proof}[Proof of Proposition~\ref{prop:largel}]
In view of the remarks in the proof of
Theorem~\ref{thm:eigenvalue}, specifically equation
\eqref{eq:dimensiondiff}, it is only necessary to do this in the cases
$d= 2$ and $d=3$. For definiteness, we write down the proof for $d=3$;
it is similar, and in fact simpler, for $d=2$. 
Consider a spherical harmonic $Y^l_l$ with $hl \geq R + h^\kappa$,
where $\kappa < 1$. The eigenvalue $e^{i\beta_{l,h}}$ is given by
\eqref{betaK} with $Y_{l}^{l}$ replacing $Y_{l}$.

First assume that $hl \geq R' > R$. Then the $K_1$ 
term in \eqref{betaK} will be $O(h^\infty)$ (for a suitable
decomposition of $S_h = K_1 + K_2 + K_3$ as above, with $R_2 < R'$),
so we only have to consider the $K_2$ term. This is given by a
pseudodifferential operator with symbol equal to $1 + O(h^\infty)$, so
the $\langle Y^l_l, K_2 Y^l_l \rangle$ term is equal to $1 +
O(h^\infty)$, proving the Proposition  in this case.

Next assume that $R + h^\kappa \leq hl \leq R_1'$. For $R' < R_1$, the
$K_2$ term in \eqref{betaK} will be $O(h^\infty)$ (for some other
decomposition of $S_h$, with $R_1 > R'$), so we only need to consider
the $K_1$ term. That is, it remains to show that 
$$
\la (K_1 - \mbox{Id}) Y^{l}_{l},  Y^l_l \ra = O(h^\infty) \mbox{ for } R +
h^{\kappa} \leq hl
$$
Using as above polar coordinates $(\varphi, \theta)$ on $S^2$ % adapted
% to the great circle $\gamma = \set{\theta = \pi/2}$ at which $Y^l_l$
% concentrates,
with dual coordinate $(\sigma, \zeta)$, we find a phase
function $\Psi$ for $K_1$ that parametrizes $L$ microlocally in the region 
$$\{ |(\sigma, \zeta)|_g \geq R - \delta \}$$ for fixed small $\delta > 0$.  Indeed, since $L$ is given by the
diagonal relation
\begin{equation}\label{eq:pseudodiffagain}
\set{ \varphi = \varphi', \, \theta = \theta', \,
\sigma = -\sigma', \, \zeta = -\zeta', \, \tau = 0 } \mbox{ for }
\set{|(\sigma, \zeta)|_g \geq R},
\end{equation}
it follows that the functions
$(\varphi, \theta, \sigma', \zeta')$ furnish local coordinates on the
Legendrian $L$ for $\{ |(\sigma, \zeta)|_g \geq R\}$ and therefore, by
continuity, for $\{ |(\sigma, \zeta)|_g \geq R - \delta\}$ for some
small $\delta > 0$. 
It then follows from \cite[Theorem 21.2.18]{Hvol3} that $L$ can  be parametrized by a phase function of the form
$$
-\varphi' v - \theta' w + H(\varphi, \theta, v, w). 
$$
Since $K_1$ is pseudodifferential for $|(\sigma', \zeta')|^2_g =
{\zeta'}^2 + (\sin \theta')^{-2} {\sigma'}^2 \geq R^2$ (i.e.\ $L$
satisfies \eqref{eq:pseudodiffagain}), we have
\begin{align*}
 v \geq R \sin \theta' \implies  
    H =   \varphi v + \theta w 
    \mbox{ and } 
    b = 1 + O(h^\infty)
\end{align*}
Thus
\begin{equation}\label{K2}
 \begin{split}
\la (K_1 - \mbox{Id}) Y^{l}_{l}, Y^{l}_{l} \ra  =   \int \Bigg( e^{i
  \big(- \varphi' v
  - \theta' w + H(\varphi, \theta, v, w)\big)/h} b(\varphi', \theta',
v, w, h) - e^{i\big( (\varphi - \varphi') v + (\theta - \theta') w
  \big)/h}  \Bigg) \\ \times \, c_l e^{i\alpha (g(\theta) +
  g(\theta'))/h} \, \frac{d\varphi \, d\theta \, d\varphi' \, d\theta' \, dv \, dw}{(2\pi
  h)^{3}} + O(h^{\infty}),
\end{split} 
\end{equation}
As above we have written $l = \alpha/h$; hence $\alpha > R + h^\kappa$.

We insert cutoff functions by writing
$$
1 = \chi \lp \frac{v - R \sin \theta'}{h^\kappa} \rp + (1 - \chi) \lp \frac{v - R \sin \theta'}{h^\kappa} \rp
$$
where $\chi(t)$ is supported in $t \leq 1/2$, equal to $1$ for $t \leq  1/4$. 
With the cutoff $\chi$ inserted, the phase function is nonstationary on the support of the integrand, since stationarity requires that $v = \alpha$. It follows that we can integrate by parts arbitrarily many times, using the fact that the differential operator 
$$
\frac1{v- \alpha} \frac{h}{i} \frac{\partial}{\partial \varphi'}
$$
leaves  both exponential factors invariant; doing this gains a factor of $h^{1 - \kappa}$ each time since $\alpha - v \geq h^\kappa/2$ on the support of the integrand. Thus the $\chi$ term is $O(h^\infty)$. 

With the cutoff $1 - \chi$ inserted, we write the integral in \eqref{K2} in the form 
\begin{equation}
 \begin{split}
 \int  e^{i\big( (\varphi - \varphi') v + (\theta - \theta') w \big)/h}  \Bigg( e^{i\big( H(\varphi, \theta, v, w) - \varphi v - \theta w \big)/h} b(\varphi', \theta', v, w, h) - 1 \Bigg) \\
 \times \, c_l (1 - \chi) \lp \frac{v - R \sin \theta'}{h^\kappa} \rp  e^{i\alpha (g(\theta) +
  g(\theta'))/h} \, \frac{d\varphi \, d\theta \, d\varphi' \, d\theta' \, dv \, dw}{(2\pi
  h)^{3}}.
\end{split} 
\end{equation}
We claim that the factor 
$$
\Bigg( e^{i\big( H(\varphi, \theta, v, w) - \varphi v - \theta w \big)/h} b(\varphi', \theta', v, w, h) - 1 \Bigg) \times  (1 - \chi) \lp \frac{v - R \sin \theta'}{h^\kappa} \rp 
$$
is $O(h^\infty)$. In fact, the term in the large brackets is $O(h^\infty)$ for $v \geq R \sin \theta'$, while if  $v \leq R \sin \theta'$, then the $1 - \chi$ term vanishes identically.  It follows that the $1 - \chi$ term is also $O(h^\infty)$, completing the proof of Proposition~\ref{prop:largel}. 
\end{proof}

\begin{remark}\label{FIOvsODE} The reader may wonder whether a direct
  ODE attack on \eqref{eq:1} might be simpler and more straightforward
  than our FIO approach to this problem, given that our approach
  relies on \cite[Theorem 15.6]{HW2008}, which in turn rests on a
  significant amount of machinery. By contrast the WKB expansion for
  the solution yields the approximation for the eigenvalues in
  Theorem~\ref{thm:eigenvalue} in a straightforward fashion. However,
  although it is not hard to write down a WKB approximation to the
  solutions of \eqref{eq:1}, 
it seems (to the authors) that proving rigorous error bounds for such
WKB expansions is rather subtle. The problem is that to prove such
bounds, one must solve away the error term, that is,  get good
estimates on the solution to the imhomogeneous ODE where the
inhomogeneous term (the error term when the WKB approximation is
substituted into \eqref{eq:1}) is $O(h^N)$ for some sufficiently large
$N$. Notice that the ODE \eqref{eq:1} might have several turning
points, and the desired solution is governed by a boundary condition
$f'(0) = 0$ at the origin, so one needs to understand the behaviour of
the solution passing through possibly several turning points. Since
the solutions may grow exponentially in the non-interaction region,
this does not seem to be easy or straightforward, and we are not aware
of anywhere in the literature where this has been written
down. Carrying out this procedure would certainly be a worthwhile
enterprise, but we have chosen instead to build on the above-mentioned
theorem about the semiclassical scattering matrix which is already
available in the literature.

Other features recommend the FIO approach in this context.  First, the
relationship between the scattering angle and the phase-shifts is
made transparent here, or at least it is `reduced' to the fact that the
integral kernel of $S_{h}$ is a semi-classical FIO whose canonical
relation `contains' the scattering angle, while on the other hand from
the formula produced by the WKB this relationship is not immediately
apparent.  More importantly, FIO methods will be essential in treating
the noncentral case, which we intend to do in future work, and the
symmetric case under consideration is a situation in which $S_{h}$ can 
be understood almost explicitly.

\end{remark}

%%%%%%%%%%%%%%%%%%%%%%%%%%%%%%%
%%%%%%%%%%%%%%%%%%%%%%%%%%%%%%%
%%%%%%%%%%%%%%%%%%%%%%%%%%%%%%

\section{Equidistribution}\label{sec:equid}

If $\omega = \set{e^{2\pi i x_{1}}, \dots, e^{2\pi i x_{K}}}$ is any set of $K$ points on
  $\mathbb{S}^{1}$, then the \textbf{discrepancy} $D(\omega)$ is
  defined by
  \begin{equation}
    \label{eq:discrepancy}
    D(\omega) :=  \sup_{0 \le \phi_{0} < \phi_{1} \le 2 \pi}
    \absv{\frac{N(\phi_{0}, \phi_{1} ; \omega)}{K} -
    \frac{\phi_{1} - \phi_{0}}{2\pi}},
  \end{equation}
where $N(\phi_{0}, \phi_{1} ; \omega)$ is the number of points in
$\omega$ with argument in $[\phi_{0}, \phi_{1}]$ (modulo
$2\pi$), counted with multiplicity.  We state the following lemma in
slightly more generality then is
necessary for semiclassical potentials so that we may apply it
without significant modification to the case of scattering by the disk.

\begin{lemma}\label{thm:exparg}
  Let $G \colon [0, R)  \lra \mathbb{R}$ be 
  smooth and assume that 
\begin{equation}
  \set{ \alpha :
    G''(\alpha) = 0} \mbox{ is finite in } [0, R).
\label{G''condition}\end{equation}    
 Consider the points $x_{lk}$ on the unit circle
 \begin{equation}
    \label{eq:Eh}
   \mathcal{E}_{h} = \set{ x_{lk}  :=  \exp{( i G(lh)/h )}  :  0 \le lh
     < R, k
   = 1, \dots, p_{d}(l)},
  \end{equation}
included according to multiplicity.  Here
$p_{d}(l) = \dim \ker \lp \Delta_{\mathbb{S}^{d - 1}} - l(l + d
- 2) \rp$.

Then the sets $\mathcal{E}_{h}$ equidistribute as $h \to
0$.  That is, the discrepancy satisfies
\begin{equation}
  \label{eq:equidistributed}
    \lim_{h \to 0} D(\mathcal{E}_{h}) = 0.
\end{equation}
\end{lemma}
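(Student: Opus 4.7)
The plan is to invoke Weyl's equidistribution criterion on the unit circle: the discrepancy $D(\mathcal{E}_h) \to 0$ if and only if for every nonzero integer $m$,
\begin{equation*}
\frac{S_m(h)}{N_h} \to 0 \quad \text{as } h \to 0, \qquad S_m(h) := \sum_{l : 0 \le lh < R} p_d(l)\, e^{im G(lh)/h},
\end{equation*}
where $N_h = \#\mathcal{E}_h$. Since $p_d(l)$ is (for $l$ large) a polynomial in $l$ of degree $d-2$ with $p_d(l) \sim 2 l^{d-2}/(d-2)!$, a Riemann-sum computation gives $N_h \sim 2R^{d-1}/((d-1)!\, h^{d-1})$. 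Thus it suffices to show $S_m(h) = o(h^{-(d-1)})$ for each fixed nonzero $m$.

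The main tool is van der Corput's second derivative test for exponential sums. On any interval $I \subset [0,R]$ where $c \le |G''(\alpha)| \le C$, the phase $f(l) = mG(lh)/(2\pi h)$ satisfies $|f''(l)| \asymp |m| h$, and van der Corput's inequality (applied on any sub-interval $J \subset I$ to obtain partial sums) yields
\begin{equation*}
\biggl| \sum_{l : lh \in J} e^{im G(lh)/h} \biggr| \le A\Bigl( \tfrac{R}{h}\sqrt{|m|hc} + (|m|hc)^{-1/2} \Bigr) = O_{c,m}(h^{-1/2}).
\end{equation*}
To introduce the polynomial weight $p_d(l)$, I apply Abel summation with $\phi(l) = p_d(l)$ and $\psi(l) = e^{imG(lh)/h}$, using $\|\phi\|_\infty$ and the total variation of $\phi$ on the relevant range are both $O(h^{-(d-2)})$. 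This gives
\begin{equation*}
\biggl| \sum_{l : lh \in I} p_d(l)\, e^{im G(lh)/h} \biggr| = O_{c,m,d}(h^{-(d-3/2)}) = o(h^{-(d-1)})
\end{equation*}
on any interval where $|G''|$ is uniformly bounded below.

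To deal with zeros of $G''$, fix $\delta > 0$ and let $U_\delta$ be the $\delta$-neighborhood in $[0,R]$ of the finite set $Z = \{G''=0\} \cup \{0, R\}$ provided by hypothesis \eqref{G''condition}. On $[0,R] \setminus U_\delta$, a finite union of intervals, $|G''| \ge c(\delta) > 0$, so the previous estimate gives an $O_{\delta,m}(h^{-(d-3/2)})$ contribution to $S_m(h)$. For $\{l : lh \in U_\delta\}$ the number of lattice points is $O(\delta/h)$ and $p_d(l) = O(h^{-(d-2)})$, so the trivial bound contributes $O(\delta\, h^{-(d-1)})$, i.e. $O(\delta)\cdot N_h$. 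Combining,
\begin{equation*}
\bigg|\frac{S_m(h)}{N_h}\bigg| \le C\delta + C(\delta, m)\, h^{1/2}.
\end{equation*}
Taking $h \to 0$ first and then $\delta \to 0$ gives the desired convergence, and hence \eqref{eq:equidistributed}.

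The main obstacle is technical rather than conceptual: one must combine van der Corput with Abel summation cleanly, and the constants in van der Corput depend on $c(\delta)^{-1/2}$, which blows up as $\delta \to 0$. The two-step limit (first $h$, then $\delta$) handles this, and the finiteness of $Z$ from \eqref{G''condition} is what makes the neighborhood $U_\delta$ have measure $O(\delta)$ so that its contribution can be made arbitrarily small independently of the oscillatory cancellation.
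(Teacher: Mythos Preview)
Your argument is correct. Both you and the paper reduce to van der Corput's second-derivative bound on subintervals where $|G''|$ is bounded below, and both excise $\delta$-neighbourhoods of the finite zero set of $G''$ before letting $\delta\to 0$. The differences lie in two places. First, the paper uses the Erd\H{o}s--Tur\'an inequality, bounding $D(\mathcal{E}_h)$ by a finite weighted sum of Weyl sums with a free parameter $m$; you instead invoke Weyl's criterion directly. Second, to absorb the polynomial weight $p_d(l)$, the paper decomposes $\mathcal{E}_h$ as a superposition of nested unweighted sets $\omega(n)=\{e^{iG(lh)/h}:n\le l\le N\}$ (one copy of $\omega(n)$ for each unit of multiplicity that ``switches on'' at $l=n$) and applies the general superposition bound $D(\omega)\le\sum_i(|\omega_i|/|\omega|)D(\omega_i)$; you instead use Abel summation, bounding the weighted sum by the uniform bound on partial unweighted sums times the total variation of $p_d$.

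Your route is arguably cleaner for the bare equidistribution statement. What the paper's approach buys is an explicit quantitative estimate of the form
\[
D(\mathcal{E}_h(\epsilon)) \le c\bigl(h^\gamma + \tilde\kappa\,\tilde\rho^{-1/2}h^{1/2-\gamma/2} + \tilde\rho^{-1/2}h^{1/2} + \tilde\kappa\, h^{1-\gamma}\bigr),
\]
with $\tilde\rho=\min|G''|$ and $\tilde\kappa=2\max|G'|$ on the truncated interval. This is reused later (Section~6) to get the rate $O(k^{-1/3})$ for the disk. Your Weyl-criterion argument, as written, is purely qualitative; to recover a rate you would need to feed your Abel-summation bound back into Erd\H{o}s--Tur\'an and optimise in $m$, which would bring you essentially to the paper's inequality.
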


To apply the lemma to the eigenvalues of the scattering matrix
$S_{h}$, we must show that they still equidistribute despite
satisfying only the weaker asymptotic condition in Theorem
\ref{thm:eigenvalue}.  

\begin{proposition}\label{thm:realrealdiscrep}
%Use notation and assumptions as in Lemma \ref{thm:exparg}. 
Let $S \subset [0,R]$ be a finite set and let  
   \begin{equation}
    \label{eq:tildeEh}
   \widetilde{\mathcal{E}}_{h} = \set{ \wt{x}_{lk}  :  0\le lh
     \le R, k
   = 1, \dots, p_{d}(l)}
  \end{equation}
be a collection of points on $\mathbb{S}^{1}$ (included according to
multiplicity), such that for any $\epsilon > 0$, if $l$ satisfies $\dist(lh, S)\ge \epsilon$
then $$\wt{x}_{lk}  =
\exp{ i( G(lh)/h) +  E(l,h))}$$ where $\absv{E(l,h)} <
C(\epsilon)h$.
Then, if $G$ satisfies condition \eqref{G''condition} in Lemma \ref{thm:exparg}, $$ \lim_{h \to 0}
D(\wt{\mathcal{E}_{h}}) = 0.$$
\end{proposition}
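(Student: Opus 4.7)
The plan is to deduce the discrepancy bound for $\widetilde{\mathcal{E}}_h$ from the bound for the idealized set $\mathcal{E}_h$ already established by Lemma~\ref{thm:exparg}, via an ``$\epsilon/h$'' perturbation argument. For fixed $\epsilon > 0$, the hypothesis splits $\widetilde{\mathcal{E}}_h$ into a \emph{good} part, indexed by $l$ with $\dist(lh, S) \ge \epsilon$, for which $\wt{x}_{lk}$ is the rotation of $x_{lk}:=\exp(iG(lh)/h) \in \mathcal{E}_h$ through an angle of magnitude at most $C(\epsilon) h$; and a \emph{bad} part indexed by the remaining $l$, about which nothing is known.

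First I would bound the bad part. Since $S$ is finite and $\dist(lh,S)<\epsilon$ confines $lh$ to $|S|$ intervals of length $2\epsilon$, the number of such $l$ is at most $|S|(2\epsilon/h + 1)$. Each contributes multiplicity $p_d(l) = O((R/h)^{d-2})$, so the total count of bad points is bounded by $C_S \, \epsilon \, h^{-(d-1)}$. Since $K:=|\widetilde{\mathcal{E}}_h|\sim c_d R^{d-1} h^{-(d-1)}$, the proportion of bad points is $O_S(\epsilon)$ \emph{uniformly} in $h$; whatever their arguments, they can perturb any count $N(\phi_0,\phi_1;\widetilde{\mathcal{E}}_h)$ by at most $C_S \epsilon K$, contributing $O_S(\epsilon)$ to the discrepancy.

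For the good part, I would use that a rotation by angle at most $C(\epsilon)h$ can change membership of an arc $[\phi_0,\phi_1]$ only for points lying within $C(\epsilon) h$ of one of the endpoints. Applying the discrepancy bound of Lemma~\ref{thm:exparg} to the short arcs $[\phi_i - C(\epsilon) h, \phi_i + C(\epsilon) h]$ yields
\begin{equation*}
 \absv{ N(\phi_0,\phi_1;\widetilde{\mathcal{E}}_h^{\mathrm{good}}) - N(\phi_0,\phi_1;\mathcal{E}_h^{\mathrm{good}})} \le 2K \lp \frac{C(\epsilon) h}{\pi} + D(\mathcal{E}_h) \rp.
\end{equation*}
Combining the good and bad contributions, the triangle inequality gives, for any $\phi_0 < \phi_1$,
\begin{equation*}
 \absv{\frac{N(\phi_0,\phi_1;\widetilde{\mathcal{E}}_h)}{K} - \frac{\phi_1-\phi_0}{2\pi}} \le 3 D(\mathcal{E}_h) + C_S\, \epsilon + \frac{2 C(\epsilon) h}{\pi}.
\end{equation*}
Taking the supremum over arcs, letting $h\to 0$ (so that $D(\mathcal{E}_h)\to 0$ by Lemma~\ref{thm:exparg} and $C(\epsilon)h \to 0$ for fixed $\epsilon$), then $\epsilon \downarrow 0$, gives $D(\widetilde{\mathcal{E}}_h)\to 0$.

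The one point to be careful about is the order of quantifiers: the constant $C(\epsilon)$ is allowed to blow up as $\epsilon \to 0$, so one \emph{must} fix $\epsilon$ first, send $h \to 0$ using the rate-free conclusion of Lemma~\ref{thm:exparg} to absorb both $C(\epsilon) h$ and $D(\mathcal{E}_h)$, and only then let $\epsilon \to 0$. Everything else is the standard ``remove a small exceptional set, then propagate equidistribution under a uniformly small perturbation'' argument in discrepancy theory.
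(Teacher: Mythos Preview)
Your proposal is correct and follows essentially the same route as the paper: remove the $O_S(\epsilon)$ proportion of points indexed by $l$ with $\dist(lh,S)<\epsilon$, observe that the remaining $\wt{x}_{lk}$ differ from the corresponding $x_{lk}\in\mathcal{E}_h$ by an angle at most $C(\epsilon)h$, and control the resulting change in counts via the discrepancy of $\mathcal{E}_h$ before sending $h\to 0$ then $\epsilon\to 0$. The paper phrases the good-part step as a sandwich $N(\phi_0+Ch,\phi_1-Ch;\mathcal{E}_h(\epsilon))\le N(\phi_0,\phi_1;\wt{\mathcal{E}}_h(\epsilon))\le N(\phi_0-Ch,\phi_1+Ch;\mathcal{E}_h(\epsilon))$, whereas you bound the endpoint leakage by applying the discrepancy to the short arcs $[\phi_i-C(\epsilon)h,\phi_i+C(\epsilon)h]$; these are two ways of saying the same thing.
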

We will use the following notation.  With any set $S$ as above, let
\begin{equation}
  \label{eq:between}
  \begin{split}
    \mathcal{E}_h(\epsilon) := \mathcal{E}_h \cap \{x_{lk}\colon \dist(lh,S)\ge
    \epsilon\} \quad \mbox{and} \quad
    \wt{\mathcal{E}}_h(\epsilon) := \wt{\mathcal{E}}_h \cap
    \{\wt{x}_{lk}\colon \dist(lh,S)\ge \epsilon\},
  \end{split}
\end{equation}
always understood to include points according to multiplicity.  
\begin{proof}[Proof of Proposition \ref{thm:realrealdiscrep} assuming
  Lemma \ref{thm:exparg}:]
 The error bound $\absv{E(l,h)} <
C(\epsilon)h$ shows that,  for every $\epsilon > 0$,
there is a constant $C = C(\epsilon, S) > 0$ so
that %for $h$ sufficiently small,
\begin{equation*}
  N(\phi_{0} + Ch, \phi_{1} - Ch; \mathcal{E}_{h}(\epsilon))  \le N(\phi_{0},
  \phi_{1} ; \wt{\mathcal{E}_{h}}(\epsilon)) \le N\big(\max(\phi_{0} - Ch,0),
  \min(\phi_{1} + Ch, 2\pi); \mathcal{E}_{h}(\epsilon)\big).
\end{equation*}
Dividing through by $2 (R/h)^{d - 1} / (d - 1)!$, subtracting $(\phi_{1} -
\phi_{0})/2\pi$, and taking $h$ small gives
\begin{align*}
\absv{\frac{N(\phi_{0}, \phi_{1};  \wt{\mathcal{E}_{h}}(\epsilon))}{
   2 (R/h)^{d - 1} / (d - 1)! }  - \frac{\phi_{1} -
\phi_{0}}{2\pi} } & \le \max \left\{
   \absv{\frac{N(\phi_{0}+ Ch, \phi_{1} - Ch;  \mathcal{E}_{h}(\epsilon))}{
   2 (R/h)^{d - 1} / (d - 1)! }  - \frac{\phi_{1} -
\phi_{0}}{2\pi} }, \right. \\
& \qquad \qquad \left. \absv{\frac{N(\phi_{0}- Ch, \phi_{1} + Ch; \mathcal{E}_{h}(\epsilon))}{
   2 (R/h)^{d - 1} / (d - 1)! } -  \frac{\phi_{1} -
\phi_{0}}{2\pi} }
 \right\} \\
& \le D(\mathcal{E}_{h}(\epsilon)) + (1 + C(\epsilon))O(h) + O(\epsilon),
\end{align*}
uniformly in $h$ and $\epsilon$, where for the second inequality we used
\begin{equation}
  \label{eq:8}
  \absv{\mathcal{E}_{h}(\epsilon)} = \frac{2R^{d-1}}{h^{d-1}(d-1)!}\lp 1 + O(h)
    + O(\epsilon) \rp,
\end{equation}
where $\absv{\mathcal{E}_{h}(\epsilon)}$ is the number of points in
$\mathcal{E}_{h}(\epsilon)$. 
Similarly, $D(\mathcal{E}_{h}) = D(\mathcal{E}_{h}(\epsilon)) + O(h) +
O(\epsilon)$ for $h, \epsilon$ small, and the same is true
for $\wt{\mathcal{E}}_{h}$.  Thus
\begin{align}\label{eq:cepsilon}
  \absv{\frac{N(\phi_{0}, \phi_{1}; \wt{\mathcal{E}_{h}})}{
   2 (R/h)^{d - 1} / (d - 1)! }  - \frac{\phi_{1} -
\phi_{0}}{2\pi} } &\le D(\mathcal{E}_{h})  +  (1 + C(\epsilon))O(h) + O(\epsilon).
\end{align}
Thus
\begin{equation*}
	\limsup_{h \to 0} D(\wt{\mathcal{E}}_{h}) = O(\epsilon),
\end{equation*}
and as $\epsilon > 0$ was arbitrary, we obtain the result.
\end{proof}

\begin{remark}\label{thm:dobetter}
  Note that the proof gives no information about the exact vanishing
  rate of $D(\wt{\mathcal{E}}_{h})$ as $h \to 0$.  For this, one must
  have information on the dependence of $C(\epsilon)$ on $\epsilon$,
  and then 
  optimize in $\epsilon$ in \eqref{eq:cepsilon} as $h \to 0$.  This is what we
do in Section \ref{sec:disk} to obtain improved remainders in the case
of scattering by the disk.
\end{remark}

To prove Lemma~\ref{thm:exparg}, we use theorems from \cite{KN1974}.
The following theorem follows from \cite[ch.\ 2, eq.\ 2.42]{KN1974}:
\begin{theorem}[Erd\"os-Tur\'an]\label{thm:erdosturan}
  There is a constant $c > 0$ such that if $$\omega = \set{e^{2\pi i x_{1}}, \dots, e^{2\pi i x_{N}}}$$ is a finite sequence of $N$ points on
  $\mathbb{S}^{1}$ and $m$ is any positive integer, then
  \begin{equation}
    \label{eq:ET}
    D(\omega) \le c \lp
    \frac{1}{m} + \sum_{j = 1}^{m} \frac{1}{j} \absv{
      \frac{1}{N}\sum_{l = 1}^{N} e^{2 \pi i j x_{l} }}
    \rp.
  \end{equation}
\end{theorem}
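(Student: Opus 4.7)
The plan is to prove the Erd\"os--Tur\'an inequality by the classical Fourier-analytic strategy of approximating the indicator of an arc by trigonometric polynomials.  First I would reduce the problem to estimating, uniformly in $0\le a < b\le 2\pi$, the deviation
\[
\Delta_N(a,b):=\frac{N(a,b;\omega)}{N}-\frac{b-a}{2\pi},
\]
since $D(\omega)=\sup_{a<b}|\Delta_N(a,b)|$.  It therefore suffices to bound $|\Delta_N(a,b)|$ by the right-hand side of \eqref{eq:ET} with constants that do not depend on $a,b$.

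The main ingredient I would invoke is the Beurling--Selberg construction: for each integer $m\ge 1$ there exist trigonometric polynomials $T_m^{\pm}$ on $\mathbb{R}/2\pi\mathbb{Z}$ of degree at most $m$ satisfying $T_m^{-}\le \chi_{[a,b]}\le T_m^{+}$ pointwise, with
\[
\frac{1}{2\pi}\int_{0}^{2\pi}\bigl(T_m^{+}-T_m^{-}\bigr)(x)\,dx\;\le\;\frac{c_1}{m+1},
\]
and Fourier coefficients obeying $|\widehat{T_m^{\pm}}(j)|\le c_2/|j|$ for $1\le |j|\le m$ and vanishing for $|j|>m$.  Given this, the argument becomes essentially automatic: sandwich the empirical average between $\frac{1}{N}\sum_l T_m^{\pm}(x_l)$ and expand via Fourier series.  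The zero-frequency term of $T_m^{\pm}$ is $(b-a)/(2\pi)$ up to error at most $c_1/(m+1)$, producing the $1/m$ contribution in \eqref{eq:ET}.  The remaining nonzero frequencies contribute at most
\[
\sum_{0<|j|\le m}\bigl|\widehat{T_m^{\pm}}(j)\bigr|\,\Bigl|\frac{1}{N}\sum_{l=1}^{N} e^{2\pi i j x_l}\Bigr|,
\]
which, by the Fourier coefficient bound and the $j\leftrightarrow -j$ symmetry of the exponential sum, is controlled by a universal constant times $\sum_{j=1}^{m}\frac{1}{j}\bigl|\frac{1}{N}\sum_l e^{2\pi i j x_l}\bigr|$.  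Combining the two contributions yields \eqref{eq:ET}.

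The hard part is the Beurling--Selberg construction itself, which is not at all transparent: one builds $T_m^{\pm}$ by periodizing appropriate dilates of the Beurling extremal majorant/minorant of $\mathrm{sgn}$ and then applying the construction to each endpoint of $[a,b]$, with the sharp bound on the Fourier coefficients following from a Poisson summation argument.  Since this construction is completely classical, my proposal is to quote it directly from \cite[Ch.~2]{KN1974} rather than to reproduce it, and then to execute the short sandwich-and-expand argument above.  The only remaining bookkeeping is to track the absolute constants so that a single $c$ can be extracted in the final estimate.
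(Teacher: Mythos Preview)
The paper does not prove this theorem at all; it simply quotes the result from \cite[Ch.~2, eq.~2.42]{KN1974}. Your proposal is therefore entirely consistent with the paper's treatment, and in fact goes further by sketching the standard Fourier-analytic sandwich argument before ultimately citing the same reference.
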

To bound the exponential sums that appear on the right hand side of
\eqref{eq:ET}, we use
\cite[ch.\ 1, thm.\ 2.7]{KN1974}, namely
\begin{theorem}\label{thm:expsum}
  Let $a$ and $b$ be integers with $a < b$, and let $f$ be twice
  differentiable on $[a, b]$ with $\absv{f''(x)} \ge \rho > 0$ for $x \in [a, b]$.  Then 
  \begin{equation}
    \label{eq:expsumbound}
    \absv{ \sum_{l = a}^{b} e^{2\pi i f(l)} } \le \lp \absv{ f'(b) - f'(a) }
    + 2 \rp \lp \frac{4}{\sqrt{\rho}} + 3 \rp . 
  \end{equation}
\end{theorem}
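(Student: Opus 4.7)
The plan is to prove this via van der Corput's second-derivative test, which reduces to the Kuzmin--Landau first-derivative estimate through a partition-and-optimize argument. Since $\absv{f''(x)} \geq \rho > 0$, the derivative $f'$ is strictly monotonic on $[a,b]$; without loss of generality assume $f'$ is increasing. Partition $[a,b]$ according to the integer-centered half-unit intervals in the range of $f'$: for each integer $k$ such that $[k-1/2, k+1/2]$ meets $[f'(a), f'(b)]$, let $I_k \subseteq [a,b]$ be the (possibly empty) subinterval on which $f'(x) \in [k-1/2, k+1/2]$. The number of such $k$ is at most $\absv{f'(b)-f'(a)}+2$, so it suffices to bound the contribution of each $I_k$ by $4/\sqrt{\rho}+3$ and sum.

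The essential input is the following \emph{Kuzmin--Landau lemma}: if $g \in C^1[c,d]$, $g'$ is monotonic on $[c,d]$, and the distance from $g'(x)$ to the nearest integer is at least $\lambda > 0$ throughout $[c,d]$, then
\[
\Bigl| \sum_{c \leq n \leq d} e^{2\pi i g(n)} \Bigr| \leq \frac{C}{\lambda}
\]
for a small absolute constant $C$. The standard proof writes, by the mean value theorem, $e^{2\pi i g(n+1)} - e^{2\pi i g(n)} = (e^{2\pi i g'(\xi_n)}-1)\,e^{2\pi i g(n)}$ for some $\xi_n \in (n,n+1)$, solves for $e^{2\pi i g(n)}$, and applies Abel summation using the monotonicity of $g'$ to control the telescoping sum of reciprocals. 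The denominator $\absv{e^{2\pi i g'(\xi_n)}-1}$ is bounded below by a constant multiple of $\lambda$ because $g'(\xi_n)$ is at distance at least $\lambda$ from the integers, which yields the $1/\lambda$ bound.

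Next, on each $I_k$ pick a parameter $\lambda \in (0,1/2)$ and split $I_k$ into the resonant piece $I_k^{\mathrm{res}} := \{x \in I_k : \absv{f'(x)-k} < \lambda\}$ and its complement in $I_k$. Because $\absv{f''} \geq \rho$, the set $I_k^{\mathrm{res}}$ has length at most $2\lambda/\rho$, so it contains at most $2\lambda/\rho + 1$ integers; estimate its contribution to the sum trivially by this count. On $I_k \setminus I_k^{\mathrm{res}}$ (a union of at most two subintervals on which $f'$ is monotonic), set $g(x) := f(x) - kx$; then $\absv{g'(x)} = \absv{f'(x)-k} \in [\lambda, 1/2]$, so $g'$ is monotonic and at distance at least $\lambda$ from the nearest integer. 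Since $e^{2\pi i g(n)} = e^{2\pi i f(n)}$ for integer $n$, Kuzmin--Landau bounds this part by $C/\lambda$ per subinterval. Thus each $I_k$ contributes at most $C'/\lambda + 2\lambda/\rho + 1$; choosing $\lambda$ proportional to $\sqrt{\rho}$ balances the two terms and yields a per-$k$ bound of the form $4/\sqrt{\rho}+3$. Summing over the at most $\absv{f'(b)-f'(a)}+2$ values of $k$ gives \eqref{eq:expsumbound}.

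The main obstacle is the bookkeeping of constants: proving the existence of bounds of the form $O(1/\sqrt{\rho}) \cdot O(\absv{f'(b)-f'(a)})$ is standard, but making the absolute constants in the Kuzmin--Landau step and in the resonant bound combine to yield precisely the stated factors $4$ and $3$ requires careful attention to boundary contributions, both from the Abel summation and from the endpoints of the partition intervals $I_k$. The underlying inequalities are elementary, but tuning the constants to match the exact form \eqref{eq:expsumbound} is the least mechanical part of the argument.
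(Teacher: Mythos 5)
This theorem is not proved in the paper at all: it is quoted verbatim from Kuipers--Niederreiter \cite[ch.~1, thm.~2.7]{KN1974}, so there is no internal argument to compare against, and your sketch is in effect a reconstruction of the standard proof of that classical result (van der Corput's second-derivative test via the Kuzmin--Landau first-derivative estimate), which is essentially how it is established in the reference. Your skeleton is correct: split the range of the monotone function $f'$ into unit windows centred at integers (at most $\absv{f'(b)-f'(a)}+2$ of them), bound the resonant part of each window, where $f'$ is within $\lambda$ of the centre, trivially by its integer count, using $\absv{f''}\ge\rho$ to confine it to an interval of length $\le 2\lambda/\rho$, apply Kuzmin--Landau with the shift $g(x)=f(x)-kx$ on the at most two nonresonant pieces, and balance with $\lambda\asymp\sqrt{\rho}$. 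The only place the write-up falls short of the literal statement is the point you flag yourself: quoting Kuzmin--Landau as ``$\le C/\lambda$ for a small absolute constant $C$'' only yields \eqref{eq:expsumbound} with unspecified absolute constants in place of $4$ and $3$. This is easily closed rather than a conceptual gap: use a sharp form of Kuzmin--Landau, e.g.\ $\absv{\sum e^{2\pi i g(n)}}\le\cot(\pi\lambda/2)\le 2/(\pi\lambda)$, take $\lambda=\min\bigl(\tfrac12\sqrt{\rho},\tfrac12\bigr)$, and count integers in the resonant subinterval by its length plus one. For $\rho\le 1$ each window then contributes at most
\begin{equation*}
\frac{8}{\pi\sqrt{\rho}}+\frac{1}{\sqrt{\rho}}+1\le\frac{4}{\sqrt{\rho}}+3,
\end{equation*}
while for $\rho>1$ the nonresonant pieces are empty (this is where the cap $\lambda\le\tfrac12$ matters) and the trivial count gives at most $1/\rho+1\le 3$; summing over windows yields exactly the stated bound. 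Two further minor points: make the windows half-open so boundary integers are not double-counted, and note that for the application in Section~4 the exact values $4$ and $3$ are irrelevant anyway---only the dependence on $\rho^{-1/2}$ and on $\absv{f'(b)-f'(a)}$ enters the discrepancy estimates.
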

We also need \cite[thm.\ 2.6]{KN1974} (with minor
modifications in notation):
\begin{theorem}\label{thm:superposition}
  For $1 \le i \le k$, let $\omega_{i}$ be a set of $\absv{\omega_{i}}$
  points on $\mathbb{S}^{1}$ with discrepancy $D(\omega_{i})$.  Let
  $\omega$ be a concatenation of $\omega_{1}, \dots, \omega_{k}$, that
  is, a set obtained by listing in some order the terms of the
  $\omega_{i}$.  Then 
  \begin{equation}
    \label{eq:superdisc}
    D(\omega) \le \sum_{i = 1}^{k} \frac{\absv{\omega_{i}}}{\absv{\omega}} D(\omega_{i}),
  \end{equation}
where $\absv{\omega}$ is the number of points in $\omega$.
\end{theorem}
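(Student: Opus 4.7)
The claim is essentially an averaging statement: the normalized counting measure of the concatenation $\omega$ is the weighted average, with weights $|\omega_i|/|\omega|$, of the normalized counting measures of the $\omega_i$. The plan is therefore to exploit the linearity of the counting function $N(\phi_0,\phi_1;\cdot)$ under concatenation, subtract the "ideal" uniform count, and apply the triangle inequality.

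More precisely, I will first observe that since $\omega$ is obtained by listing the terms of $\omega_1,\dots,\omega_k$ in some order, counting multiplicities one has the identity
\begin{equation*}
N(\phi_0,\phi_1;\omega) = \sum_{i=1}^{k} N(\phi_0,\phi_1;\omega_i)
\qquad\text{and}\qquad
|\omega| = \sum_{i=1}^{k} |\omega_i|,
\end{equation*}
for every $0 \le \phi_0 < \phi_1 \le 2\pi$. Next I will use these to rewrite the discrepancy-type expression
\begin{equation*}
\frac{N(\phi_0,\phi_1;\omega)}{|\omega|} - \frac{\phi_1-\phi_0}{2\pi}
= \sum_{i=1}^{k} \frac{|\omega_i|}{|\omega|}\left(\frac{N(\phi_0,\phi_1;\omega_i)}{|\omega_i|} - \frac{\phi_1-\phi_0}{2\pi}\right),
\end{equation*}
where the factor $(\phi_1-\phi_0)/(2\pi)$ on the left has been split as $\sum_i |\omega_i|/|\omega|$ times itself.

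Then the triangle inequality bounds the absolute value of the left-hand side by
\begin{equation*}
\sum_{i=1}^{k} \frac{|\omega_i|}{|\omega|}\left|\frac{N(\phi_0,\phi_1;\omega_i)}{|\omega_i|} - \frac{\phi_1-\phi_0}{2\pi}\right| \le \sum_{i=1}^{k} \frac{|\omega_i|}{|\omega|} D(\omega_i),
\end{equation*}
by the definition \eqref{eq:discrepancy} of $D(\omega_i)$. Taking the supremum over $0 \le \phi_0 < \phi_1 \le 2\pi$ on the left gives $D(\omega)$ on the left while leaving the right-hand side unchanged (since it is independent of $\phi_0,\phi_1$), which yields \eqref{eq:superdisc}.

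There is no real obstacle here: the argument is purely combinatorial and the only subtlety is bookkeeping multiplicities correctly under concatenation, which is handled automatically by the additivity of $N(\phi_0,\phi_1;\cdot)$ over the disjoint union of multisets. Thus the whole proof is three lines once the identity for $N(\phi_0,\phi_1;\omega)$ is in place.
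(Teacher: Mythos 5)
Your proof is correct: the additivity of the counting function under concatenation, the weighted-average rewriting using $\sum_i |\omega_i|/|\omega| = 1$, and the triangle inequality followed by taking the supremum give exactly \eqref{eq:superdisc}. The paper itself offers no proof of this statement --- it is quoted from Kuipers--Niederreiter \cite{KN1974} --- and your argument is precisely the standard one found there, so there is nothing to add.
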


\medskip 

\begin{proof}[Proof of Lemma \ref{thm:exparg}] We begin by assuming
  that $G''$ has no zeroes in the
  open interval $(0, R)$.  
  %In this case, the set $S$ in\eqref{eq:between} above is equal to$\set{0, R}$.

We first
  analyze the subset $\mathcal{E}_{h}(\epsilon) \subset
  \mathcal{E}_{h}$ defined in \eqref{eq:between}.
Define
\begin{equation}
  \label{eq:rhotilde}
  \begin{split}
    \tilde{\rho} = \tilde{\rho}(\epsilon) &= \min_{\epsilon \le \alpha \le R -
      \epsilon} \absv{G''(\alpha) }\\
    \tilde{\kappa} =   \tilde{\kappa}(\epsilon) &= 2 \max_{\epsilon \le \alpha \le R -
        \epsilon} \absv{G'(\alpha)}.
  \end{split}
\end{equation}
We will show that for each
$\gamma \in (0, 1)  $ there is a constant $c =
c(\gamma) > 0$ so that for each $\epsilon > 0$, 
\begin{equation}\label{eq:epsilondiscrep}
\begin{split}
   D(\mathcal{E}_{h}(\epsilon)) 
& <  c \lp h^{\gamma} + \tilde{\kappa} \tilde{\rho}^{-1/2} h^{1/2 -\gamma/2} + \tilde{\rho}^{-1/2} h^{1/2} + \tilde{\kappa} h^{1 - \gamma} \rp . 
\end{split}
\end{equation}
Since $\mathcal{E}_{h}(\epsilon) - \mathcal{E}_{h} = h^{-d+ 1}(O(\epsilon)
+ O(h))$, for some $c = c(\gamma) > 0$ independent of $\epsilon$ we
have
%%%Should replace (mod $2\pi$) with $\pmod{2\pi}$ or (modulo $2\pi$).
\begin{equation}\label{eq:realdiscrep}
\begin{split}
D(\mathcal{E}_{h}) & = \sup_{0 \le \phi_{0} < \phi_{1} \le 2\pi}  \absv{\frac{N(\phi_{0}, \phi_{1}; \mathcal{E}_{h})}{\absv{\mathcal{E}_{h}}} - \frac{\phi_{1} -
   \phi_{0}}{2\pi}}\\
   &\le c(\epsilon + h) + D(\mathcal{E}_{h}(\epsilon)),
  \end{split}
\end{equation}
showing that
\begin{equation*}
\limsup_{h \to 0} D(\mathcal{E}_{h}) \le c \epsilon. 
\end{equation*}
Since $\epsilon > 0$ is arbitrary, this gives \eqref{eq:equidistributed}. Thus it remains to prove \eqref{eq:epsilondiscrep}. 

\vskip 5pt

\noindent \textit{Case 1: dimension $d = 2$.} 
Note that when $d = 2$ the multiplicity of the eigenspaces is $p_{2}(l) = 1$ if $l
= 0$ and $2$ otherwise, so that 
\begin{equation*}
  \absv{\mathcal{E}_{h}(\epsilon)} = 2
\lp \lfloor (R - \epsilon)/h \rfloor - \lceil \epsilon/h \rceil + 1
\rp.
\end{equation*}
  We apply Theorem
\ref{thm:erdosturan} with $\omega = \mathcal{E}_{h}(\epsilon)$, so
that, in the notation of Theorem \ref{thm:erdosturan},
  $x_{l} = G(lh)/(2\pi h)$.  Thus
\begin{equation*}
  D(\mathcal{E}_{h}(\epsilon)) \le c \lp \frac{1}{m} + \sum_{j =
    1}^{m} \frac{1}{j} \absv{ \frac{1}{\lfloor (R - \epsilon)/h \rfloor -
      \lceil \epsilon/h \rceil + 1} \sum_{l = \lceil \epsilon/h \rceil}^{\lfloor (R - \epsilon) / h \rfloor} e^{
      i j G(lh) / h } }
  \rp
\end{equation*}
Then we apply Theorem \ref{thm:expsum} with $f(x) = (j/2\pi)G(xh)/h$, $a = \lceil \epsilon/h \rceil$, and $b =
  \lfloor (R - \epsilon) / h \rfloor$.
Thus, if $xh \le R -
  \epsilon$ then $\absv{f''(x)} = h j \absv{G''(xh)} / 2 \pi \ge h j \tilde{\rho} / 2 \pi$, which equals $\rho$ in the notation of Theorem \ref{thm:expsum}.  
It follows that
\begin{equation}    
\label{eq:ET2}
D(\mathcal{E}_{h}(\epsilon)) \le c \lp \frac{1}{m} + \frac{h}{R - 2\epsilon - h} \sum_{j = 1}^{m} \frac{1}{j} \lp
    \frac{j \tilde{\kappa}}{\pi} + 2 \rp \lp \lp \frac{32 \pi}{h j \tilde{\rho}} \rp^{1/2} + 3\rp \rp.
\end{equation}
By letting $m = \lfloor h^{-\gamma} \rfloor$ for any $\gamma > 0$, we obtain \eqref{eq:epsilondiscrep}.

Finally, suppose there are a finite number of points  $0 < a_1 <
\ldots < a_{n - 1} < R$ with $G''(\alpha_{i}) = 0$, and let $a_0 = 0,
a_{n} = R$.  Note
that, if we define $\mathcal{E}_{h}(a, b)$ to be the set of
$x_{l,k}$ with $a \le lh \le b$, counted with multiplicity, then the above
arguments show that $\lim_{h \to 0}D(\mathcal{E}_{h}(a,b)) = 0$; in fact if
$\wt{\rho}(\epsilon, h)$ (resp.\ $\wt{\kappa}$) is defined to be
$\min_{a + \epsilon \le \alpha \le b - \epsilon}\absv{G''(\alpha)}$
(resp.\ $\max \absv{G'(\alpha)}$), then the proof is the same.  The lemma in the $d = 2$
case now follows from Theorem~\ref{thm:superposition} since by \eqref{eq:superdisc}
\begin{equation*}
  D(\mathcal{E}_{h}) \le \sum_{i = 1}^{n} D(\mathcal{E}_{h}(a_{i - 1}, a_{i})) .
\end{equation*}
The proof is now complete in the case $d = 2$.

\vskip 5pt

\noindent \textit{Case 2: dimension $d > 2$.}  As in the $d = 2$ case, we begin by assuming that $G''(\alpha)$ has
  zeroes only at $0$ and $R$. We now have to deal with the increasing multiplicities $p_d(l)$.

We will apply Theorem \ref{thm:superposition} to
$D(\mathcal{E}_{h}(\epsilon))$ decomposed as a superposition in the
following way.  It will be convenient to set
\begin{equation}
  \label{eq:N}
  N := \lfloor (R - \epsilon)/h \rfloor.
\end{equation}
Define, 
 \begin{align*}
   \omega(n) : = &  \set{ e^{i G(l h)/h} :  n \le l  \le N
     \rfloor}
 \end{align*}
with \textit{unit multiplicity}.
Note that $\omega(n)$ has $N - n + 1$ elements.
Setting
\begin{align*}
  \omega_{1} &= \omega(0) \\
  \omega_{2} &= \dots = \omega_{p_d(1)}  = \omega(1) \\
  \omega_{p_d(1) + 1} &= \dots = \omega_{p_d(2)} = \omega(2) \\
  \vdots &  \\
  \omega_{p_{d}(N - 1) + 1 }&= \dots = \omega_{p_{d}(N)} = \omega(N).
\end{align*}
we see that the set $\mathcal{E}_{h}(\epsilon)$ is the superposition of the sets $\omega_{1},
    \dots \omega_{p_{d}(N)}.$

The discrepancy $D(\omega(n))$ can be estimated using the method from
the $d = 2$ case.  In particular, as in \eqref{eq:ET2} we see that for
any positive integer $m$, 
\begin{equation}\label{eq:dwn}
 D(\omega(n)) \le c \lp \frac{1}{m} + \sum_{j = 1}^{m} \frac{1}{(N - n + 1) j}\absv{ \lp
    \tilde{\kappa}j + 2 \rp \lp \lp\frac{32}{j c \tilde{\rho} h}\rp^{1/2} + 3\rp }\rp
\end{equation}
By Theorem \ref{thm:superposition}, we have
\begin{equation}
  \label{eq:superdiscd}
  \begin{split}
    D(\mathcal{E}_{h}(\epsilon)) &\le \sum_{i = 0}^{p_{d}(N)}
    \frac{\absv{\omega_{i}}}{\absv{\mathcal{E}_{h}(\epsilon)}}
    D(\omega_{i}) \\
    &\le \frac{1}{\absv{\mathcal{E}_{h}(\epsilon)}} \sum_{n = 0}^{N}
    \absv{\omega(n)}
   (p_{d}(n) - p_{d}(n - 1))  D(\omega(n)) \\
   &\le ch^{d - 1} \sum_{n = 1}^{N}
    (N - n + 1) (n + 1)^{d - 3}  D(\omega(n)),
  \end{split}
\end{equation}
Substituting the estimate \eqref{eq:dwn} into
\eqref{eq:superdiscd}, again
with $m = \lfloor h^{-\gamma} \rfloor$ for some fixed $\gamma \in (0,
1)$, we end up with five terms to deal with corresponding to the five
terms in the right hand side of \eqref{eq:dwn}.  For
all of these we use standard bounds for sums of polynomials and $N
\sim c/h$. The
easiest is the $1/m$ term, since
\begin{equation}
  \label{eq:discrepmult1}
  h^{d - 1}\sum_{n = 1}^{N}
    (N - n + 1) (n + 1)^{d - 3} h^{\gamma} \le c h^{\gamma}.
\end{equation}
Next we do the terms involving $\tilde{\rho}$.  There is
\begin{equation}
  \label{eq:discrepmult2}
  \begin{split}
  &   h^{d-1}\sum_{n = 1}^{N} (N - n + 1) (n + 1)^{d - 3}\sum_{j = 1}^{\lfloor h^{-\gamma} \rfloor}
    \frac{1}{(N - n + 1)j}\lp
    \tilde{\kappa}j\rp \lp \frac{32}{j c \tilde{\rho} h}\rp^{1/2}  \\
  &\le c h^{d-1} \tilde{\kappa} \sum_{n = 1}^{N}  (n + 1)^{d - 3}\sum_{j = 1}^{\lfloor h^{-\gamma} \rfloor}
        \lp \frac{32}{j \tilde{\rho} h}\rp^{1/2} \\
  & \le  c h^{d-1} \tilde{\kappa}\lp \frac{1}{\tilde{\rho} h}\rp^{1/2} \sum_{n = 1}^{N}  (n + 1)^{d - 3}\sum_{j = 1}^{\lfloor h^{-\gamma} \rfloor}
        j^{-1/2} \\
  & \le  c \lp \frac{1}{\tilde{\rho} }\rp^{1/2} \tilde{\kappa} h^{1/2 - \gamma/2},
  \end{split}
\end{equation}
and
\begin{equation}
  \label{eq:discrepmult3}
  \begin{split}
  &   h^{d-1} \sum_{n = 1}^{N} (N - n + 1) (n + 1)^{d - 3}\sum_{j = 1}^{\lfloor h^{-\gamma} \rfloor}
    \frac{2}{(N - n + 1)j}\lp \frac{32}{j c \tilde{\rho} h}\rp^{1/2}  \\
  & \le  h^{d-1} c \lp \frac{1}{\tilde{\rho}h}\rp^{1/2} \sum_{n = 1}^{N}  (n + 1)^{d - 3}\sum_{j =
    1}^{\lfloor h^{-\gamma} \rfloor}
        j^{-3/2} \\
  & \le  c \lp \frac{1}{\tilde{\rho} }\rp^{1/2} h^{-1/2}.
  \end{split}
\end{equation}
The other terms are
\begin{equation}
  \label{eq:discrepmult4}
  \begin{split}
  &   h^{d-1} \sum_{n = 1}^{N} (N - n + 1) (n + 1)^{d - 3}\sum_{j = 1}^{\lfloor h^{-\gamma} \rfloor}
    \frac{1}{(N - n + 1)j}\lp
    \tilde{\kappa}j\rp \times 3  \\
  &\le  c \tilde{\kappa} h^{1 - \gamma} \le  c \tilde{\kappa} h^{1/2 - \gamma/2},
  \end{split}
\end{equation}

and
\begin{equation}
  \label{eq:discrepmult5}
  \begin{split}
  &   h^{d-1} \sum_{n = 1}^{N} (N - n + 1) (n + 1)^{d - 3}\sum_{j = 1}^{\lfloor h^{-\gamma} \rfloor}
    \frac{6}{(N - n + 1)j}   \le c h \log (1/h). 
  \end{split}
\end{equation}
Combining \eqref{eq:discrepmult1}--~\eqref{eq:discrepmult5} with
\eqref{eq:superdiscd} gives \eqref{eq:epsilondiscrep}.

We take care of
the case of a non-trivial number of zeroes of $G''$ on $[0, R]$
exactly as in the $d = 2$ case. This completes the proof of Lemma \ref{thm:exparg}. 

\end{proof}

\bigskip

We can now prove Theorem \ref{thm:main}.
\begin{proof}[Proof of Theorem \ref{thm:main}.]
By Theorem \ref{thm:eigenvalue}, the eigenvalues of the scattering matrix for $0 \leq l \leq R/h$ are given by 
$$
\exp\set{ \frac{i}{h} \lp G \big((l + \frac{d-2}{2})h \big)  \rp }+ O(h)
$$
in the case of even dimension $d$, and the same in odd dimensions away from any $\epsilon$ neighbourhood of the set  $S = \{ \alpha \mid G'(\alpha)/\pi \in \mathbb{Z} \}$, where $\alpha = l/h$. Since by assumption $\Sigma = G'$ satisfies \eqref{scattering-angle-condition}, $G$ satisfies  \eqref{G''condition}, 
so the conclusion of Lemma~\ref{thm:exparg} holds for $G$. Finally, \eqref{G''condition} implies that $S$ is a finite set, so we can apply Proposition \ref{thm:realrealdiscrep}, proving \eqref{eq:maintheorem} and hence Theorem \ref{thm:main}. 
\end{proof}

\bigskip

%%%%%%%%%%%%%%%%%%%%%%%%%%%%%%%%
%%%%%%%%%%%%%%%%%%%%%%%%%%%%%%%%
%%%%%%%%%%%%%%%%%%%%%%%%%%%%%%%%

\section{Examples of potentials that satisfy Assumption \ref{scattering-angle-condition}
  }\label{sec:example}  
  We use expression \eqref{eq:integral} for the scattering angle to prove 

\begin{proposition}\label{thm:examplepots}
  Suppose that on the region of interaction $\mathcal{R}$ the potential $V$ satisfies
 \begin{equation}
V'(r) \leq 0  \text{ and }
  (V')^{2} + (1 -V) (V' + rV'') > 0 \text{ for } r < R. 
\label{Vconditions}\end{equation}
  Then
  $\Sigma'(\alpha) <
  0$ for for $\alpha \in [0, R)$.  
  
The conditions in \eqref{Vconditions} hold in particular for $V = cW$, where $c$ is
  sufficiently large and where $W(r) = 0$ for $r \geq R$, $W(r)
  > 0$ for $0 \leq r < R$ and $W''(r)$ is positive and monotone
  decreasing in some nonempty interval $[R- \epsilon, R)$. 
  An explicit example is 
$$
W(r) = \begin{cases} e^{1/(r^2 - R^2)}, \quad \, r < R \\
0, \qquad \qquad \quad r \geq R. \end{cases}
$$
\end{proposition}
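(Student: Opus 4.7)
The plan is to derive an explicit integral formula for $\Sigma'(\alpha)$ whose sign can be read off directly, using an Abel-transform substitution. Under the hypothesis $V'(r) \leq 0$, the map $u = r\sqrt{1-V(r)}$ is a smooth, strictly increasing bijection on the interaction region (since $u'(r) = [2(1-V) - rV']/[2\sqrt{1-V}] > 0$), and the identity $r^2(1-V) - \alpha^2 = u^2 - \alpha^2$ converts \eqref{eq:integral} into
\begin{equation*}
\Sigma(\alpha) = 2\alpha \int_\alpha^R \frac{\eta(u)\,du}{\sqrt{u^2 - \alpha^2}}, \qquad \eta(u) = \frac{-V'(r)}{\sqrt{1-V}\bigl(2(1-V) - rV'\bigr)},
\end{equation*}
after subtracting off the free ($V \equiv 0$) contribution, which accounts for the $\pi$ term in \eqref{eq:integral} and the tail integral over $u \geq R$, where the integrand reduces to $1/u$.

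To differentiate at the singular endpoint $u = \alpha$, I would use the substitution $u = \alpha\cosh t$, which makes the integrand smooth on $[0, \cosh^{-1}(R/\alpha)]$; the contribution from the upper endpoint vanishes because $\eta(R) = 0$ (as $V'(R) = 0$). Reverting to $u$ yields
\begin{equation*}
\Sigma'(\alpha) = 2 \int_\alpha^R \frac{[u\eta(u)]'\,du}{\sqrt{u^2 - \alpha^2}}.
\end{equation*}
A direct computation gives $u\eta(u) = -rV'/[2(1-V) - rV']$, and since $du/dr > 0$, the sign of $[u\eta(u)]'$ matches the sign of $d[u\eta]/dr$. Applying the quotient rule produces a rational expression whose numerator, up to a positive factor, is a combination of $V, V', V''$ closely tied to the expression $(V')^2 + (1-V)(V' + rV'')$ appearing in the hypothesis. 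The key algebraic step is to verify that this stated inequality, combined with $V' \leq 0$ and $V < 1$ on the interaction region, forces $[u\eta]'(u) < 0$ throughout, giving $\Sigma'(\alpha) < 0$.

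For the second part, substituting $V = cW$ into the hypothesized expression yields
\begin{equation*}
(V')^2 + (1-V)(V'+rV'') = c^2\bigl[(W')^2 - W(W'+rW'')\bigr] + c(W'+rW'').
\end{equation*}
For $c$ large the $c^2$ term dominates, so it suffices to verify $(W')^2 > W(W'+rW'')$ wherever $W > 0$. Away from a left neighborhood of $R$ this follows by a compactness argument using $W > 0$ and the fact that $W'$ is not identically zero there; in a left neighborhood of $R$, the hypothesis that $W''$ is positive and monotone decreasing, combined with $W(R) = W'(R) = 0$, lets one bound $|W'|$ from below by integrating $W''$, giving the desired asymptotic inequality. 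For the explicit $W = \exp(1/(r^2 - R^2))$, the expansion $\log W \sim -1/(2R(R-r))$ as $r \to R^-$ determines the leading asymptotics of $W, W', W''$, and the inequality is verified by direct computation. The main obstacle is the algebraic passage from the stated hypothesis to the sign of the integrand in the formula for $\Sigma'(\alpha)$, together with the asymptotic analysis near $r = R$, where the relevant quantities all decay super-polynomially and the leading-order cancellations must be tracked carefully.
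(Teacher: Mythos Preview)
Your treatment of the first assertion is correct and takes a genuinely different route from the paper. The paper substitutes $s = r/r_m$, differentiates under the integral, and must track the dependence of $r_m$ on $\alpha$ via the implicit relation $1 - \alpha^2/r_m^2 - V(r_m) = 0$; after algebra the integrand becomes a positive factor times the difference $f(r) - f(r_m)$ with $f(r) = (1-V(r))/(1-V(r) - \tfrac12 rV'(r))$, and the sign follows from computing $f'(r)$. Your substitution $u = r\sqrt{1-V(r)}$ is cleaner: it sends the lower limit to $\alpha$, so $r_m$ disappears from the formula, and the Abel-type structure lets you differentiate transparently via $u=\alpha\cosh t$. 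Both routes land on the same algebra --- your $d[u\eta(u)]/dr$ equals $-2\bigl[r(V')^2 + (1-V)(V'+rV'')\bigr]\big/\bigl(2(1-V)-rV'\bigr)^2$, and the sign follows from the hypothesis.

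The second part, however, has a genuine gap. You expand the condition with $V = cW$ and argue that the $c^2$ coefficient $(W')^2 - W(W'+rW'')$ should be positive on all of $(0,R)$. This is not guaranteed by the hypotheses on $W$, which constrain $W$ only on $[R-\epsilon, R)$; away from there $W$ may have interior critical points (indeed $W'(0)=0$ for a smooth radial function), at which your inequality is false or degenerate. Your ``compactness argument using $W>0$ and $W'$ not identically zero'' does not yield the pointwise inequality you need. The mechanism you are missing is that the conditions \eqref{Vconditions} are only required on the \emph{interaction region} $\mathcal{R} = \{r : cW(r) < 1\}$. As $c \to \infty$ this region shrinks to an arbitrarily small left neighborhood of $R$, and there the hypotheses on $W''$ together with $W'(R)=0$ give $W' + rW'' > 0$ directly --- your own integration estimate $|W'(r)| \leq (R-r)W''(r)$ does exactly this --- from which \eqref{Vconditions} follows since $1-V>0$ and $(V')^2\geq 0$ on $\mathcal{R}$. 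No global analysis of $W$, and no control of the $c^2$ term, is needed; this shrinking of $\mathcal{R}$ is precisely the paper's argument.
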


\begin{proof}
In \eqref{eq:integral}, set $s = r / r_{m}$, so
\begin{align*}
    \Sigma(\alpha)  &= \pi -  2 \int_{r_{m}}^{\infty} \frac{\alpha}{(sr_{m})^{2} \sqrt{1
      - \alpha^{2}/((sr_{m})^{2} - V(sr_{m})}} \, d(sr_{m}) \\
  &=\pi -  2 \int_{1}^{\infty} \frac{\alpha}{s^{2} \sqrt{r_{m}^{2}
      - \alpha^{2}/s^{2} - r_{m}^{2}V(sr_{m})}} \, ds.
\end{align*}
Differentiating under the integral sign gives
\begin{align*}
- \frac{1}{2}   \Sigma'(\alpha) &= \int_{1}^{\infty} 
  \lp \frac{ 1 }{s^{2} \sqrt{ r_{m}^{2}
      - \alpha^{2}/s^{2} - r_{m}^{2}V(sr_{m})}} \right. \\
    & \quad \left. - \frac{\alpha}{2}  \  \frac{  
      \ 2 r_{m}r_m' - 2\alpha/s^{2}  -
      2r_{m} V(sr_{m}) r_m'  - r_{m}^{2} s
      V'(sr_{m}) r_m' }{s^{2} \lp r_{m}^{2}
      - \alpha^{2}/s^{2} - r_{m}^{2}V(sr_{m})\rp^{3/2}} \rp ds\\
     &= \int_{1}^{\infty} 
  \frac{\lp r_{m}^{2}( 1 - V(sr_{m})\rp -  \alpha r_{m}
    r_m' (1  - V(s r_{m}) -  \frac{1}{2}  r_{m} s
      V'(sr_{m}) )}{s^{2} \lp r_{m}^{2}
      - \alpha^{2}/s^{2} - r_{m}^{2}V(sr_{m})\rp^{3/2}} \, ds
\end{align*}
Differentiating
\begin{equation}
1 - \alpha^{2}/r_{m}^{2} - V(r_{m}) = 0\label{eq:rmin}
\end{equation}
shows $\alpha r_{m}r_m' = \alpha^{2} \big( 1 -
    V(r_{m}) -  \frac{1}{2}r_{m} V'(r_{m}) \big)^{-1}.$
Plugging this in gives
\begin{align*}
-\frac{1}{2}  \Sigma'(\alpha) &= \int_{1}^{\infty} 
 \lp r_{m}^{2}( 1 - V(sr_{m})) - \alpha^{2} \frac{1  - V(s
      r_{m}) -  \frac{1}{2} r_{m} s  V'(sr_{m}) }{1 - V(r_{m}) -
      \frac{1}{2} r_{m} V'(r_{m}) } \rp    \\  & \quad \times  \frac{1}{s^{2} \lp r_{m}^{2}
      - \alpha^{2}/s^{2} - r_{m}^{2}V(sr_{m})\rp^{3/2}}  \, ds
\end{align*}
and using \eqref{eq:rmin} again shows that $(1/2) 
\Sigma'(\alpha)$ is equal to
\begin{align*}
  \label{eq:angularvariation}
 & \int_{1}^{\infty} 
  \frac{  1  - V(s
      r_{m}) -  \frac{1}{2} r_{m} s  V'(sr_{m} ) }  {s^{2} \lp r_{m}^{2}
      - \alpha^{2}/s^{2} - r_{m}^{2}V(sr_{m})\rp^{3/2}}  
      \\ & \qquad \times 
   \lp
    \frac{1 - V(sr_{m})}{1  - V(s
      r_{m}) -  \frac{1}{2} r_{m} s  V'(sr_{m})} - \frac{1 - V(r_{m}) }{ 1 - V(r_{m}) -
      \frac{1}{2} r_{m} V'(r_{m}) }
   \rp r_{m}^{2}  ds \\
 & = \int_{r_{m}}^{\infty} 
  \frac{ 1  - V(r) -  \frac{1}{2} r  V'(r )}{r^{2} \lp 1
      - \alpha^{2}/r^{2} - V(r)\rp^{3/2}} 
     % \\ &\qquad \times 
     \lp 
   \frac{1 - V(r)}{1  - V(r) -  \frac{1}{2} r  V'(r)} - \frac{1 - V(r_{m}) }{ 1 - V(r_{m}) -
      \frac{1}{2} r_{m} V'(r_{m}) }  \rp dr .
\end{align*}
Differentiating the expression $(1 - V(r)) / (1  - V(r) -  \frac{1}{2}
r  V'(r)) $ with respect to $r$ and using $V' \le 0$, we see that the integrand is
positive if for $r_{m} < r < R$ if
\begin{equation}\label{eq:wierdcondition}
 r (V')^{2} + (1 -V) (V' + rV'') > 0.
\end{equation}
These conditions are implied by \eqref{Vconditions} in the first paragraph of the proposition. 

To check that the condition in the second paragraph is 
sufficient, observe the following.  If
$V(r) \ge 0$, $V'(r) < 0$ and $V''(r) > 0$ on some open interval $(R -
\epsilon, R)$, it follows that for $r$ sufficiently close to $R$, that $ V' +
rV'' > 0$.  By picking large enough $c > 0$, 
\eqref{eq:wierdcondition} will hold on the region of interaction
$\mathcal{R}$.
\end{proof}
% divide through by V', use V''/V' = - (log - V')', note that log goes to infty at bdry of support

\begin{remark}
One can ask whether there exist potentials for which equidistribution fails. 
It is clear from Theorem~\ref{thm:eigenvalue} that the scattering matrix for $V$ will
fail to be equidistributed if the scattering angle $\Sigma(\alpha)$ associated to $V$ is
equal to a constant rational multiple of $2\pi$ on some interval with $\alpha < R$. 
So we can ask whether there exists such a potential. Let $\mathcal{S}$ be the map \eqref{eq:integral} taking $V$ to its
scattering angle $\Sigma$. Linearizing $\mathcal{S}$ 
at the zero potential gives an integral operator which is an elliptic pseudodifferential
operator of order $1/2$ (apart from an extra singularity at $r=\alpha = 0$). This makes
it seem likely to the authors that the range of $\mathcal{S}$ is quite large, very likely
including scattering angles such as described above that would imply non-equidistribution. 
\end{remark}

%%%%%%%%%%%%%%%%%%%%%%%%%%%%%%%%
%%%%%%%%%%%%%%%%%%%%%%%%%%%%%%%%%
%%%%%%%%%%%%%%%%%%%%%%%%%%%%%%%%%

\section{Scattering by the disk}\label{sec:disk}

In this section we will prove Theorem \ref{thm:disk} from the
introduction.  We restrict our attention to the ball of radius $1$,
since the phase shifts for the ball of radius $R$ can be obtained from those for
$R = 1$ by a scaling argument.

Here we use an ODE analogous to that in \eqref{eq:1} to give a
formula for the eigenvalues.  In fact, for any smooth solution $f_{l}$ to $\Delta_{\mathbb{S}^{d - 1}} f_{l} =
l(l + d - 2) f_{l}$, a straightforward computation shows that
\begin{equation}\label{eq:smatrix}
  S_{k}(f_{l}) = - \frac{H^{(1)}_{l + (d - 2)/2}(k)}{H^{(2)}_{l + (d -
      2)/2}(k)} f_{l},
\end{equation}
where the $H^{(i)}_{\nu}$ are Hankel functions
of order $\nu$ \cite{AS1964}. It follows that
\begin{equation}
S_{k}(f_{l}) = e^{ix_{k,l}} f_l, \quad x_{k,l} = 2 \arg H^{(1)}_{l +
  (d - 2)/2}(k) + \pi .
\label{arg}\end{equation}

We now prove the first part of Theorem~\ref{thm:disk}, which amounts to determining the asymptotics of the argument of the Hankel function when $l/k \leq 1 - k^{-1/3}$. Let us define in this section $\nu = l + (d-2)/2$ and $\alpha = \nu/k$, and study the range $\alpha \leq 1 - k^{-1/3}$.

We first consider the range where $\alpha$ is small, say $\alpha \leq 3/4$. Then we use the expressions \cite[9.1.22]{AS1964} for $J_\nu$ and $Y_\nu$ to derive 
$$
H^{(1)}_{\alpha k}(k) = \frac1{\pi} \int_0^\pi e^{ik (\sin \theta - \alpha \theta)} \, d\theta - \Big( \text{ integrals from $0$ to $\infty$} \Big) .
$$
It is easy to bound the integrals from $0$ to $\infty$ by $O(k^{-1})$ uniformly for $\alpha \leq 3/4$. On the other hand, stationary phase applied to the $\theta$ integral gives 
$$
H^{(1)}_{\alpha k}(k) = \sqrt{\frac{2}{\pi k}} (1 - \alpha^2)^{-1/4} e^{ik(\sqrt{1 - \alpha^2} - \alpha \cos^{-1}\alpha)} e^{-i\pi/4} + O(k^{-1}),
$$
from which it follows that 
\begin{equation}
2 \arg H^{(1)}_{\alpha k}(k) = 2k (\sqrt{1 - \alpha^2} - \alpha \cos^{-1}\alpha) - \pi/2 + O(k^{-1/2})
\label{smallalpha}\end{equation}
in this range. In view of \eqref{arg} and \eqref{Gbdef} this proves \eqref{ball-eval-approx} in the range $\alpha \leq 3/4$. 

In the range $1/2 \leq \alpha \leq 1 - k^{-1/3}$, we use the asymptotic formulae \cite[9.3.35, 9.3.36]{AS1964} which shows that 
\begin{equation}
H^{(1)}_{\alpha k}(k) = \Big( \frac{-4\zeta}{\alpha^{-2} - 1} \Big)^{1/4} \nu^{-1/3} \Big( (\Ai - i \Bi)(\nu^{2/3} \zeta) \Big) (1 + O(k^{-2})) + O(k^{-3/2}), \quad \nu = \alpha k, 
\label{H-asympt}\end{equation}
where $\zeta = \zeta(\alpha)$ is defined by 
\begin{equation}
 \frac{2}{3} \lp - \zeta \rp^{3/2} = \int_{1}^{\alpha^{-1}} \frac{\sqrt{t^{2} -
        1}}{t} dt = \sqrt{\alpha^{-2} - 1} - \cos^{-1}(\alpha) ;
\label{zeta-alpha}\end{equation}
 notice that $\zeta$ is real and negative for $\alpha < 1$, and $-\zeta \sim c (1 - \alpha)$ for some positive $c$ as $\alpha \to 1$. To derive \eqref{H-asympt} from \cite[9.3.35, 9.3.36]{AS1964} we used the fact that $\zeta$ lies in a compact set in this range of $\alpha$, that the $a_k$ and $b_k$ are therefore uniformly bounded, that $\nu$ and $k$ are comparable when $\alpha \in [1/2, 1]$ and finally that we have bounds 
 $$
 |\Ai'(\nu^{2/3}\zeta)| + |\Bi'(\nu^{2/3} \zeta )| \leq C \nu^{1/6}
 $$
 uniformly for $\zeta$ in this range --- see \cite[10.4.62, 10.4.67]{AS1964}. It follows that 
 $$
 2 \arg H^{(1)}_{\alpha k}(k)  = 2 \Big( (\Ai - i \Bi)(\alpha^{2/3} k^{2/3} \zeta) \Big) + O(k^{-7/6}).
 $$
 Finally using the asymptotics \cite[10.4.60, 10.4.64]{AS1964}, we get 
 $$
 (\Ai - i \Bi)(\alpha^{2/3} k^{2/3} \zeta) = \frac{( \alpha k (-\zeta)^{3/2})^{-1/6}}{\pi^{1/2}} \bigg( (-i) e^{i \big( \frac{2}{3} \alpha k (-\zeta)^{3/2} + \pi/4 \big)} + O(k^{-1} (-\zeta)^{-3/2}) \bigg). 
 $$
 It follows, using the explicit expression for $\zeta(\alpha)$ in \eqref{zeta-alpha},  that
 $$
 2 \arg H^{(1)}_{\alpha k}(k)  = 2k \big( \sqrt{1-\alpha^{2} } - \alpha \cos^{-1}(\alpha) \big) - \pi/2 + O(k^{-1} (1 - \alpha)^{-3/2}). 
 $$
 Since we have taken $1 - \alpha \geq k^{-1/3}$, that gives us \eqref{ball-eval-approx}. 
 (Pleasingly, we get the same expression as in \eqref{smallalpha}, a useful check on the computations.)

We now turn to the proof of equidistribution. 
We first note that, as in the proof of Proposition~\ref{thm:realrealdiscrep} (with $S = \{ 1 \}$ and $\epsilon = k^{-1/3}$) the discrepancy of the exact eigenvalues $e^{ix_{k,l}}$ is equal to that of the approximate eigenvalues $e^{ikG_b(\alpha) - \pi/2}$ up to an error $O(k^{-1/3})$ which is acceptable. So it suffices to prove \eqref{eq:fastdecrease} for the approximations  $e^{ikG_b(\alpha) - \pi/2}$. 
%
%
%First we will bound $D(\mathcal{E}_{1/k}(\epsilon))$, where
%$\mathcal{E}_{1/k}(\epsilon)$ is as in \eqref{eq:Eh} $G(\alpha) =  - \frac{2}{3} \alpha \zeta(1/\alpha)^{3/2} $ and
%$\alpha = \nu /k$.  
We apply \eqref{eq:epsilondiscrep}, using
  \begin{equation}\label{eq:improvedkandr}
    \begin{split}
      \wt{\kappa} &\le \max_{0 \le \alpha \le 1} | G'  |\le \pi, \\
      \wt{\rho} &\ge \min_{0 \le \alpha \le 1} G''(\alpha) \ge c >
      0.
    \end{split}
\end{equation}
This
  means that in \eqref{eq:epsilondiscrep}, $\tilde{\rho}(\epsilon,
  1/k)   \ge c > 0$ and $\tilde\kappa(\epsilon, 1/k) \le \pi$ for
  all $\epsilon$, and thus for any $0 < \gamma < 1$,
  \begin{equation}\label{eq:realdiscrepdisck}
\begin{split}
  D(\mathcal{E}_{1/k}(\epsilon)) &\le C\lp  k^{-\gamma} + k^{-1/2 +
    \gamma/2} \rp.
\end{split}
\end{equation} 
Choosing $\gamma = 1/3$ completes the proof.

\bibliographystyle{abbrv}

\bibliography{sccentral}

\end{document}